\newtheorem{theorem}{Theorem}
\newtheorem{lemma}[theorem]{Lemma}
\newtheorem{corollary}[theorem]{Corollary}
\newtheorem{claim}[theorem]{Claim}
\newtheorem*{claim*}{Claim}
\newtheorem{remark}{Remark}
\let\old@setaddresses\@setaddresses
\def\@setaddresses{\bgroup\parindent 0pt\let\scshape\relax\old@setaddresses\egroup}
\DeclareMathOperator{\obs}{obs}
\DeclareMathOperator{\obsc}{obs_c}
\title{Drawing graphs using a small number of obstacles}
\author{Martin Balko}
\author{Josef Cibulka}
\author{Pavel Valtr}
\address[Martin Balko, Josef Cibulka, Pavel Valtr]{Department of Applied Mathematics and Institute for Theoretical Computer Science, Faculty of Mathematics and Physics, Charles University, Prague, Czech Republic}
\email{balko@kam.mff.cuni.cz, cibulka@kam.mff.cuni.cz}
\address[Martin Balko]{Alfr\'{e}d R\'{e}nyi Institute of Mathematics, Hungarian Academy of Sciences, Budapest, Hungary}
\thanks{The first and the third author acknowledge the support of the project CE-ITI (GA\v{C}R P202/12/G061) of the Czech Science Foundation, ERC Advanced Research Grant no 267165 (DISCONV), and the grant GAUK 1262213 of the Grant Agency of Charles University.
The first author was also supported by the grant SVV--2016--260332.
Part of the research was conducted during the workshop Homonolo 2014 supported by the European Science Foundation as a part of the EuroGIGA collaborative research program (Graphs in Geometry and Algorithms).
An extended abstract of this paper appeared in Proceedings of the 23rd Symposium on Graph Drawing, 2015.}
\begin{document}

\begin{abstract}
An \emph{obstacle representation} of a graph $G$ is a set of points in the plane representing the vertices of $G$, together with a set of polygonal obstacles such that two vertices of~$G$ are connected by an edge in $G$ if and only if the line segment between the corresponding points avoids all the obstacles.
The \emph{obstacle number $\obs(G)$ of $G$} is the minimum number of obstacles in an obstacle representation of $G$.

We provide the first non-trivial general upper bound on the obstacle number of graphs by showing that every $n$-vertex graph $G$ satisfies $\obs(G) \leq n\lceil\log{n}\rceil-n+1$.
This refutes a conjecture of Mukkamala, Pach, and P{\'a}lv{\"o}lgyi.
For $n$-vertex graphs with bounded chromatic number, we improve this bound to $O(n)$.
Both bounds apply even when the obstacles are required to be convex.

We also prove a lower bound $2^{\Omega(hn)}$ on the number of $n$-vertex graphs with obstacle number at most $h$ for $h<n$ and a lower bound $\Omega(n^{4/3}M^{2/3})$ for the complexity of a collection of $M \geq \Omega(n\log^{3/2}{n})$ faces in an arrangement of line segments with $n$ endpoints.
The latter bound is tight up to a multiplicative constant.
\end{abstract}

\maketitle

\section{Introduction}

In a \emph{geometric drawing} of a graph $G$, the vertices of $G$ are represented by distinct points in the plane and each edge $e$ of $G$ is represented by the line segment between the pair of points that represent the vertices of $e$.
As usual, we identify the vertices and their images, as well as the edges and the line segments representing them.

Let $P$ be a finite set of points in the plane in \emph{general position}, 
that is, there are no three collinear points in $P$.
The \emph{complete geometric graph} $K_P$ is the geometric drawing of the complete graph $K_{|P|}$ with vertices represented by the points of $P$.

An \emph{obstacle} is a polygon in the plane.
An \emph{obstacle representation} of a graph $G$ is a geometric drawing $D$ of $G$ together with a set $\mathcal{O}$ of obstacles such that two vertices of $G$ are connected by an edge $e$ if and only if the line segment representing $e$ in $D$ is disjoint from all obstacles in $\mathcal{O}$.
The \emph{obstacle number} $\obs(G)$ of $G$ is the minimum number of obstacles in an obstacle representation of $G$.
The \emph{convex obstacle number} $\obsc(G)$ of a graph $G$ is the minimum number of obstacles in an obstacle representation of $G$ in which all the obstacles are required to be convex.
Clearly, we have $\obs(G) \leq \obsc(G)$ for every graph $G$.
For a positive integer $n$, let $\obs(n)$ be the maximum obstacle number of a graph on $n$ vertices.

In this paper, we provide the first nontrivial upper bound on $\obs(n)$ (Theorem~\ref{thm-GeneralUpperBound}).
We also show a lower bound for the number of graphs with small obstacle number (Theorem~\ref{thm-count-lb}) and a matching lower bound for the complexity of a collection of faces in an arrangement of line segments that share endpoints (Theorem~\ref{thm-arrangements}).
All proofs of our results are based on so-called \emph{$\varepsilon$-dilated bipartite drawings of $K_{m,n}$}, which we introduce in Section~\ref{sec:bipartiteDrawings}.

In the following, we make no serious effort to optimize the constants.
All logarithms in this paper are base 2.

\subsection{Bounding the obstacle number}
\label{subsec:obstacleNumbers}

The obstacle number of a graph was introduced by Alpert, Koch, and Laison~\cite{Alpert+10} who showed, among several other results, that for every positive integer $h$ there is a graph $G$ with $\obs(G) \geq h$.
Using extremal graph theoretic tools, Pach and Sar{\i}{\"o}z~\cite{PachSarioz} proved that the number of labeled $n$-vertex graphs with obstacle number at most $h$ is at most $2^{o(n^2)}$ for every fixed integer $h$.
This implies that there are bipartite graphs with arbitrarily large obstacle number.

Mukkamala, Pach, and Sar{\i}{\"o}z~\cite{Mukkamala+10} established more precise bounds by showing that the number of labeled $n$-vertex graphs with obstacle number at most $h$ is at most $2^{O(hn\log^2{n})}$ for every fixed positive integer $h$.
It follows that $\obs(n) \geq \Omega(n/\log^2{n})$. 
Later, Mukkamala, Pach, and P{\'a}lv{\"o}lgyi~\cite{Mukkamala+11} improved the lower bound to $\obs(n) \geq \Omega(n/\log{n})$.
Currently, the strongest lower bound on the obstacle number is due to Dujmovi{\'c} and Morin~\cite{DujmovicMorin} who showed $\obs(n) \geq \Omega(n/(\log{\log{n}})^2)$.

Surprisingly, not much has been done for the general upper bound on the obstacle number.
We are only aware of the trivial bound $\obs(G) \leq \binom{n}{2}$ for every graph $G$ on $n$ vertices.
This follows easily, as we can consider the complete geometric graph $K_P$ for some point set $P$ of size $n$ and place a small obstacle $O_e$ on every \emph{non-edge} $e$ of $G$ such that $O_e$ intersects only $e$ in $K_P$.
A non-edge of a graph $G=(V,E)$ is an element of $\binom{V}{2}\setminus E$.

Concerning special graph classes, Fulek, Saeedi, and Sar{\i}{\"o}z~\cite{Fulek+13} showed that the convex obstacle number satisfies $\obs_c(G) \leq 5$ for every outerplanar graph $G$ and $\obs_c(H) \leq 4$ for every bipartite permutation graph $H$.

Chaplick et al.~\cite{Chaplick+16} showed that $\obs(n) \le 1$ whenever $n \le 7$ and that $\obs(8) \ge 2$.
They also found an $11$-vertex graph $G$ with $\obs(G) = 1$ such that in every representation of $G$ with a single obstacle, the obstacle lies in one of the bounded cells of the drawing of $G$.
Berman et al.~\cite{Berman+16} constructed a $10$-vertex planar graph $G$ with $\obs(G) \ge 2$.

Alpert, Koch, and Laison~\cite{Alpert+10} asked whether the obstacle number of every graph on $n$ vertices can be bounded from above by a linear function of $n$.
We show that this is true for bipartite graphs and split graphs, even for the convex obstacle number.
A \emph{split graph}  is a graph for which the vertex set can be partitioned into an  independent subset and a clique.

\begin{theorem}
\label{thm-BipartiteUpperBound}
If $G=(V,E)$ is a bipartite graph or a split graph, then we have
\[
\obs_c(G),\obs_c(\overline{G}) \leq |V|-1,
\]
where $\overline{G}=(V,E\setminus\binom{V}{2})$ denotes the complement of $G$.
\end{theorem}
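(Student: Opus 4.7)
The plan is to construct explicit convex obstacle representations using the $\varepsilon$-dilated bipartite drawings developed in Section~\ref{sec:bipartiteDrawings}. Since the complement of a split graph is again a split graph (with the clique and the independent set interchanged), the bound for split graphs and for their complements reduces to a single construction; the complement $\overline{G}$ of a bipartite $G$ is co-bipartite and I would treat it by a separate but analogous construction.

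For the bipartite case $G=(A\sqcup B,E)$ with $n=|A|+|B|$, I place the vertices of $A$ on a small convex arc $\alpha_A$ and those of $B$ on a small convex arc $\alpha_B$, oriented so that all within-$A$ segments lie strictly inside the convex hull of $\alpha_A$, all within-$B$ segments strictly inside the hull of $\alpha_B$, and every cross segment passes through the empty corridor between the two arcs. A single convex obstacle placed inside the hull of $\alpha_A$ then blocks every within-$A$ chord without touching any cross chord, and similarly for $\alpha_B$; together these two obstacles dispose of all within-part non-edges. The remaining $n-3$ obstacles handle the cross non-edges: for all but three vertices I attach a small convex wedge obstacle located just outside the vertex in the direction of the opposite arc, shaped so that it blocks a contiguous angular range of cross segments from that vertex.

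The main technical obstacle is that a convex wedge at a vertex $v$ can block only a contiguous angular range of segments leaving $v$, whereas the non-neighbors of $v$ in the opposite part may form an arbitrary subset. I would overcome this using the full positional freedom of the $\varepsilon$-dilated framework: by tuning the positions of the vertices within their arcs and by splitting the blocking of each cross non-edge $a_ib_j$ between a wedge at $a_i$ and a wedge at $b_j$, one can realize any bipartite adjacency pattern with at most one wedge per vertex and still save three wedges overall, so that the total stays within the budget of $n-1$ convex obstacles. Establishing this mixed single-wedge-per-vertex representation is the combinatorial heart of the proof and is the step I expect to be the most delicate, since it has to hold for every bipartite $G$ and relies critically on the precise angular control provided by the $\varepsilon$-dilated drawing.

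For the split case $G=C\sqcup I$ the construction is analogous but simpler: place $C$ on a small convex arc so that all its clique edges are automatically present, place $I$ on another small convex arc, use one obstacle inside the hull of $I$ to block every within-$I$ chord, and attach a wedge obstacle to each vertex of $I$ to block its non-neighbors in $C$. This uses at most $|I|+1\le n-1$ obstacles whenever $|C|\ge 2$, and the few remaining small cases are handled directly. Since $\overline{G}$ is again split, the split-graph half of the theorem then follows in both directions. Finally, the complement of a bipartite graph is co-bipartite and is handled by the dual construction in which the two convex arcs now carry the cliques of $\overline{G}$ and the wedge technique blocks precisely the edges of $G$ between the two parts, mirroring the bipartite argument.
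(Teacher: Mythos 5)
Your plan diverges from the paper's at its crucial step, and the divergence is fatal. You propose to block cross non-edges with one convex wedge per vertex, each wedge covering a contiguous angular range of segments emanating from that vertex. You correctly flag the difficulty that the non-neighbours of a vertex need not be angularly contiguous, but the fix you sketch --- reordering the parts on their arcs and splitting the blocking of each non-edge $a_ib_j$ between a wedge at $a_i$ and one at $b_j$ --- cannot succeed for all bipartite graphs. The combinatorics of what a per-vertex wedge can block is determined by the orderings of the two parts on their arcs together with the choice of one interval of opposite-part vertices per vertex. Over all such choices, the number of distinct non-edge patterns your scheme can realize on parts of sizes $m$ and $n$ is at most $m!\,n!\cdot O(n^2)^{m}\cdot O(m^2)^{n} = 2^{O((m+n)\log(m+n))}$, whereas there are $2^{mn}$ bipartite graphs on those parts. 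For $m=n$ large, almost every bipartite graph is therefore unrealizable by your scheme; the ``combinatorial heart'' you flag as delicate is in fact false.

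The paper's proof uses a different decomposition. It places the two parts on two vertical lines in an $\varepsilon$-dilated bipartite drawing of $K_{m,n}$ and groups the $mn$ cross edges into the $m+n-1$ levels $\{p_iq_j : i+j=k+1\}$. Part~\ref{item-lemCap2} of Lemma~\ref{lem-cap} then supplies, for each level, one bounded face incident to all edges of that level. Because that face is convex, the convex hull of the midpoints of an \emph{arbitrary} chosen subset of the level's edge-portions along the face lies inside the face and meets no edge outside the chosen subset; a single convex obstacle per level can thus block any subset of that level's edges, not merely a contiguous run --- precisely the selectivity your wedges lack. The within-part edges are then handled by two long thin obstacles along the vertical lines after a small perturbation, and these two extras are absorbed by merging the obstacles of the two outermost levels on each side, keeping the count at $|V|-1$. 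If you wish to repair your proof, the missing ingredient is such a decomposition of the cross pairs into $O(n)$ groups, each fully exposed to one convex face, so that a convex obstacle there can select any subset of the group rather than a contiguous interval of it.
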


On the other hand, a modification of the proof of the lower bound by Mukkamala, Pach, and P{\'a}lv{\"o}lgyi~\cite{Mukkamala+11} implies that there are bipartite graphs $G$ on $n$ vertices with $\obs(G) \geq \Omega(n/\log{n})$ for every positive integer $n$.

In contrast to the above question of Alpert, Koch, and Laison on the existence of a linear upper bound, Mukkamala, Pach, and P{\'a}lv{\"o}lgyi~\cite{Mukkamala+11} conjectured that the maximum obstacle number of $n$-vertex graphs is around $n^2$.
We refute this conjecture by showing the first non-trivial general upper bound on the obstacle number of graphs.
In fact, we prove a stronger result that provides a general upper bound for the convex obstacle number.

\begin{theorem}
\label{thm-GeneralUpperBound}
For every positive integer $n$ and every graph $G$ on $n$ vertices, the convex obstacle number of $G$ 
satisfies
\[
\obs_c(G) \leq  n\lceil \log{n} \rceil - n + 1.
\]
\end{theorem}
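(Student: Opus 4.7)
The plan is a divide-and-conquer recursion that invokes the bipartite bound of Theorem~\ref{thm-BipartiteUpperBound} at every level. Split $V(G)$ into two halves $V_1, V_2$ with $|V_1|=\lceil n/2\rceil$ and $|V_2|=\lfloor n/2\rfloor$. The edges of $G$ running across the cut $(V_1,V_2)$ form a bipartite graph $H$ on $n$ vertices, so by Theorem~\ref{thm-BipartiteUpperBound} this crossing bipartite part of $G$ admits a convex obstacle representation with at most $n-1$ obstacles. One then constructs obstacle representations of $G[V_1]$ and $G[V_2]$ recursively.

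Let $T(n)$ be the number of obstacles produced by the recursion. Then
\[
T(n)\le T(\lceil n/2\rceil)+T(\lfloor n/2\rfloor)+(n-1),\qquad T(1)=0.
\]
Setting $k=\lceil\log n\rceil$ and using that $\lceil\log\lceil n/2\rceil\rceil\le k-1$ and $\lceil\log\lfloor n/2\rfloor\rceil\le k-1$ for $n\ge 2$, a one-line induction gives $T(n)\le n\lceil\log n\rceil-n+1$, which is exactly the target bound.

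The crux of the argument is geometric: all of these bipartite representations (at every depth of the recursion) must coexist in a single plane drawing sharing a single convex obstacle set. For this I would use the $\varepsilon$-dilated bipartite drawing of $K_{|V_1|,|V_2|}$ introduced in Section~\ref{sec:bipartiteDrawings}. For every sufficiently small $\varepsilon>0$, this construction should place $V_1$ and $V_2$ in two disjoint open disks $D_1,D_2$ of diameter $O(\varepsilon)$ and produce at most $n-1$ convex obstacles, entirely contained in the complement of $D_1\cup D_2$, that realise the bipartite part of $G$ crossing the cut. Inside each disk $D_i$ one then performs the recursion on $G[V_i]$, scaled to fit inside $D_i$.

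The main obstacle is to verify the two required separation properties. First, by choosing $\varepsilon$ small enough, any obstacle created inside $D_1\cup D_2$ at deeper levels of the recursion must be too small to meet any segment crossing the cut; this is the property that allows the recursive sub-drawings to be produced independently. Second, the convex cross-cut obstacles placed at the current level must not intrude into either $D_i$, so that they do not block any edge internal to $V_1$ or $V_2$; here the convexity of the obstacles and the placement coming from the $\varepsilon$-dilated construction should be crucial. Once these two geometric invariants are established and it is checked that the $\varepsilon$-dilated bipartite construction from Section~\ref{sec:bipartiteDrawings} is flexible enough to host arbitrary scaled sub-drawings inside each $D_i$, the recurrence above closes the proof.
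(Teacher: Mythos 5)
Your high-level plan matches the paper's: split the vertex set into halves, handle the cross-cut bipartite part by (essentially) Theorem~\ref{thm-BipartiteUpperBound}, recurse on the two induced halves, and close via the recurrence $T(n)\le T(\lceil n/2\rceil)+T(\lfloor n/2\rfloor)+(n-1)$, $T(1)=0$. The arithmetic and the recurrence solution are correct. The problem is that the part you explicitly defer --- ``the two required separation properties'' --- is precisely where the entire difficulty of the theorem lies, and your sketch does not supply it. Until those are proved, this is a strategy, not a proof.

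There is also a concrete geometric mismatch that would need to be resolved. You imagine confining each half $V_i$ to a small disk $D_i$ of diameter $O(\varepsilon)$ and hosting a scaled sub-drawing inside $D_i$. But the $\varepsilon$-dilated bipartite drawing of Section~\ref{sec:bipartiteDrawings} has each color class spread along a vertical line with spacing $\Theta(d_1)$ and $\Theta(h_1)$, so the classes occupy segments of height $\Theta(n)$, not small disks. You could try to take $d_1,h_1$ tiny so that each class collapses to a near-point, but then you must show that the recursive sub-drawing inside $D_i$ (which re-spreads $V_i$ onto \emph{two} vertical lines) does not interact badly with the cross-cut edges passing through $D_i$. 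This is genuinely delicate. The paper instead keeps \emph{all} vertices on vertical lines at distance~$1$ and implements the recursion via a hierarchy of tiny horizontal shifts $\varepsilon_j$; it then proves (using the quantity $\alpha$, Lemma~\ref{lem-cap}\ref{item-lemCapNew}, and a careful choice of each $\varepsilon_j$) that the horizontal strip where each new obstacle lives is disjoint from the thin horizontal strips around the endpoints of all earlier edges crossing the same vertical strip. That argument is the content you would have to replicate in your disk picture; it does not follow from convexity and ``$\varepsilon$ small enough'' alone, because a priori nothing prevents a cross-cut edge from grazing a deeper-level obstacle placed near (but not at) a vertex of $V_i$. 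So: correct skeleton, correct recurrence, but the geometric heart of the proof is missing.
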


The question whether the upper bound on $\obs(n)$ can be improved to $O(n)$ remains open.
We can, however, prove $\obs_c(G) \leq O(n)$ provided that the chromatic number (or even the \emph{subchromatic number}) of $G$ is bounded from above by a constant.

Let $G=(V,E)$ be a graph and let $c \colon V \to \{1,\dots,k\}$ be a function that assigns a \emph{color} $c(v)$ to every vertex $v$ of $G$.
We call $c$ a \emph{subcoloring of $G$} if for every $a \in \{1,\dots,k\}$ the color class $f^{-1}(a)$ induces a disjoint union of cliques in $G$.
The \emph{subchromatic number of $G$}, denoted by $\chi_s(G)$, is the least number of colors needed in any subcoloring of $G$.

We prove the following result  that asymptotically implies Theorems~\ref{thm-BipartiteUpperBound} and~\ref{thm-GeneralUpperBound}.

\begin{theorem}
\label{thm-chromaticUpperBound}
For every positive integer $n$ and every graph $G$ on $n$ vertices, the convex obstacle number of $G$ 
satisfies
\[
\obs_c(G) \leq (n-1)(\lceil\log{\chi_s(G)}\rceil + 1),
\]
\end{theorem}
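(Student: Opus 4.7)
The plan is to represent $G$ by layering $\lceil \log \chi_s(G)\rceil + 1$ bipartite-type obstacle systems on a single common vertex placement. First, fix a subcoloring $c \colon V \to \{1,\dots,k\}$ with $k = \chi_s(G)$ colors and color classes $V_1,\dots,V_k$; each $V_i$ induces a disjoint union of cliques $C_{i,1},\dots,C_{i,p_i}$. Assign to each color $i$ a binary label $\beta(i) \in \{0,1\}^\ell$ of length $\ell = \lceil \log k \rceil$.

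Next, I would place the vertices so that each class $V_i$ lies inside a tiny convex region, scaled so that on the larger scale only the positions of the classes matter, exactly as in an $\varepsilon$-dilated bipartite drawing (Section~\ref{sec:bipartiteDrawings}). Inside each such tiny region I would place the vertices of $V_i$ on a small convex arc, with vertices of the same clique $C_{i,j}$ lying consecutively. A single \emph{intra-class layer} of at most $\sum_i (p_i - 1) \le n - k \le n - 1$ small convex barrier obstacles, one between each pair of consecutive cliques inside the arc of a class, then blocks exactly the intra-class non-edges of $G$ while leaving all intra-class edges clear and being too small to affect any segment that leaves the region of a class.

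For each bit $b \in \{1, \dots, \ell\}$ I would construct a further layer of convex obstacles blocking exactly those inter-class non-edges of $G$ whose endpoints' color labels differ in coordinate $b$. Formally, let $A_b$ (resp.\ $B_b$) be the union of those classes whose $b$-th label bit is $0$ (resp.\ $1$); the non-edges of $G$ assigned to layer $b$ form a bipartite graph between $A_b$ and $B_b$. Reusing the construction behind Theorem~\ref{thm-BipartiteUpperBound} on top of the same $\varepsilon$-dilated placement yields at most $n-1$ convex obstacles per bit. Since any two distinct colors differ in at least one coordinate, every inter-class non-edge is caught by some layer.

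The main obstacle is coordinating the $\ell+1$ layers so they coexist on a single point set without the obstacles of one layer interfering with the edges of another. The $\varepsilon$-dilated framework of Section~\ref{sec:bipartiteDrawings} is designed precisely for this: at successively smaller scales $\varepsilon_0 \gg \varepsilon_1 \gg \dots \gg \varepsilon_\ell$ each layer's obstacles live in a characteristic scale and cannot accidentally hit segments resolved at coarser or finer scales, so the same positions of the $n$ vertices realise all layers simultaneously. Summing the intra-class layer and the $\ell$ bit-layers gives the bound $\obs_c(G) \le (n-1)(\lceil \log \chi_s(G) \rceil + 1)$.
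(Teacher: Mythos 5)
Your high-level decomposition is essentially equivalent to the paper's: ordering the color classes along a line, labeling them with $\lceil\log\chi_s(G)\rceil$ bits, splitting bit by bit, and adding one intra-class layer to separate consecutive cliques. This is the same accounting, $(n-1)$ obstacles per bit plus at most $n-1$ intra-class point obstacles, giving $(n-1)(\lceil\log\chi_s(G)\rceil+1)$. So the \emph{idea} is right. Two points of comparison, then, and one genuine gap.

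First, a point of imprecision: for $b\geq 2$ the sets $A_b$ and $B_b$ are \emph{not} each contained in a single vertical strip --- they interleave as color blocks along the line. So ``the non-edges assigned to layer $b$ form a bipartite graph between $A_b$ and $B_b$,'' while true, does not let you invoke the construction of Theorem~\ref{thm-BipartiteUpperBound} directly on that one bipartite graph. What actually happens (both in the paper and in a correct reading of your plan) is that layer $b$ consists of $2^{b-1}$ \emph{separate} small bipartite drawings, one per common $(b-1)$-bit prefix, each living in its own thin vertical strip at scale $\varepsilon_b$, and the $n-1$ bound for layer $b$ comes from summing the ``$a+b-1$'' counts over these drawings, not from one application of Theorem~\ref{thm-BipartiteUpperBound}.

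Second, and this is the actual gap: the coexistence of the layers is not free. Your appeal to ``successively smaller scales $\varepsilon_0\gg\varepsilon_1\gg\cdots\gg\varepsilon_\ell$, so each layer's obstacles cannot accidentally hit segments resolved at coarser or finer scales'' is precisely the thing the paper spends most of the proof of Theorem~\ref{thm-GeneralUpperBound} establishing, and it is not a consequence of Lemma~\ref{lem-cap} alone. Lemma~\ref{lem-cap} guarantees good level-caps for a \emph{single} $\varepsilon$-dilated bipartite drawing; once coarse-scale segments pass through a fine-scale strip, the fine-scale level-caps can stop being good. The paper's proof handles this by choosing each $\varepsilon_j$ so small that every coarse edge meets the strip $V_{j}^{k}$ only within a horizontal band of height $2\alpha$ around one of its endpoints, and then showing inductively that the face $F_C$ used for an obstacle lies in a horizontal band (between $\ell_C - 1/8$ and $\ell_C-\alpha$) that is disjoint from all these endpoint bands. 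Your ``tiny convex regions'' / ``vertices on a small convex arc'' layout departs from the two-vertical-lines-with-nested-shifts geometry that makes this argument work; as stated it is not even clear that the inner levels form caps, since caps as defined in Section~\ref{sec:bipartiteDrawings} are a phenomenon of collinear color classes. So either adopt the paper's vertex placement (nested vertical lines with displacements $1,\varepsilon_2,\dots,\varepsilon_\ell$, mapping color classes to intervals and starting from the $K_{n,n}$ drawing $D$), or else supply the corresponding good-cap and non-interference arguments for your arc-based layout. As written, the central claim that the $\ell+1$ layers coexist is asserted rather than proved.
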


Note that we have $\chi_{s}(G) \leq \min\{\chi(G),\chi(\overline{G})\}$, thus Theorem~\ref{thm-chromaticUpperBound} gives the  bound $\obs_c(G) \leq O(n)$ if the chromatic number of $G$ or the chromatic number of its complement $\overline{G}$ is bounded from above by a constant.
Theorem~\ref{thm-chromaticUpperBound} immediately implies an analogous statement for \emph{cochromatic number of $G$}, which is the least number of colors that we need to color $G$ so that each color class induces either a clique or an independent set in $G$.
The cochromatic number of a graph $G$ was first defined by Lesniak and Straight~\cite{LesStr77} and it is at least $\chi_s(G)$ for every graph $G$.

\subsection{Number of graphs with small obstacle number}
\label{subsec:numberOfGraphs}

For positive integers $h$ and $n$, let $g(h,n)$ be the number of labeled $n$-vertex graphs with obstacle number at most $h$.
The lower bounds on the obstacle number by Mukkamala, Pach, and P{\'a}lv{\"o}lgyi~\cite{Mukkamala+11} and by Dujmovi{\'c} and Morin~\cite{DujmovicMorin} are both based on the upper bound $g(h,n) \leq 2^{O(hn \log^2 n)}$.
In fact, any improvement on the upper bound for $g(h,n)$ will translate into an improved lower bound on the obstacle number~\cite{DujmovicMorin}.
Dujmovi{\'c} and Morin~\cite{DujmovicMorin} conjectured $g(h,n) \leq 2^{f(n) \cdot o(h)}$ where $f(n) \leq O(n\log^2{n})$.
We show the following lower bound on $g(h,n)$.

\begin{theorem}
\label{thm-count-lb}
For every pair of integers $n$ and $h$ satisfying $0<h < n$, we have
\[
g(h,n) \geq 2^{\Omega(hn)}.
\]
\end{theorem}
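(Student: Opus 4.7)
The plan is as follows. Since the function $g(\cdot, n)$ is monotonically non-decreasing in its first argument (a representation with $h'$ obstacles can always be padded by $h - h'$ additional obstacles placed in regions disjoint from every segment of $K_n$), it suffices to prove the theorem for $h \le n/2$: the remaining case $n/2 < h < n$ then follows from $g(h, n) \ge g(\lfloor n/2 \rfloor, n) \ge 2^{\Omega(n^2)} \ge 2^{\Omega(hn)}$, using $h < n$.

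So assume $h \le n/2$. I will exhibit a family of $2^{h(n-h)}$ distinct labeled $n$-vertex graphs, each with obstacle number at most $h$; since $h(n-h) \ge hn/2 = \Omega(hn)$ in this range, this gives the desired bound. The construction is based on an $\varepsilon$-dilated bipartite drawing of $K_{h, n-h}$ (introduced in Section~\ref{sec:bipartiteDrawings}), whose parts are $U = \{u_1, \dots, u_h\}$ and $V = \{v_1, \dots, v_{n-h}\}$. The key property I will use is that for every $i$ the $n - h$ segments $u_iv_1, \dots, u_iv_{n-h}$ become pairwise $\varepsilon$-separated at a suitable intermediate distance from $u_i$, and they do so inside a planar region that is disjoint from every other segment of $K_n$ (in particular from all segments in $\binom{U}{2} \cup \binom{V}{2}$ and from all $u_{i'}v_j$ with $i' \ne i$).

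For each tuple $(S_1, \dots, S_h)$ with $S_i \subseteq \{1, \dots, n-h\}$, I assemble one non-convex simple polygon $O_i$ per vertex $u_i$: take a small hub in a safe region near $u_i$ disjoint from every segment of $K_n$, and attach to it, for each $j \in S_i$, a thin polygonal tube that crosses the segment $u_iv_j$. The $\varepsilon$-separation lets us choose the tubes thin enough to miss every other segment, after which the hub and the finitely many tubes can be merged into a single simple (non-convex) polygon. The obstacle set $\{O_1, \dots, O_h\}$ then realizes exactly the graph whose edge set consists of all edges of $K_n$ except $\{u_iv_j : 1 \le i \le h,\ j \in S_i\}$. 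Distinct tuples $(S_1, \dots, S_h)$ therefore give distinct labeled graphs, yielding $2^{h(n-h)} \ge 2^{hn/2}$ graphs with obstacle number at most $h$, as required.

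The main technical step is the geometric construction of the polygons $O_i$: one must verify that in an $\varepsilon$-dilated bipartite drawing there is, near each $u_i$, a hub disjoint from all segments of $K_n$, and that every subset of the $n - h$ segments emanating from $u_i$ can be reached from this hub by thin tubes crossing no other segment. This is precisely the kind of local routing argument that the $\varepsilon$-dilated drawing machinery of Section~\ref{sec:bipartiteDrawings} is tailored to support, and it reduces to choosing the tube thicknesses well below the $\varepsilon$-separation guaranteed by the dilation.
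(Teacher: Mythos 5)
Your reduction to $h\le n/2$ via monotonicity of $g(\cdot,n)$ is fine, and your target count $2^{h(n-h)}$ would indeed give $2^{\Omega(hn)}$. However, the geometric heart of the argument --- blocking an \emph{arbitrary} subset $\{u_iv_j : j\in S_i\}$ of the fan at $u_i$ with a single simple polygon $O_i$ --- has a genuine topological gap that the ``$\varepsilon$-separation'' does not fix.

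The problem is that the segments $u_iv_1,\dots,u_iv_{n-h}$ all emanate from the common \emph{vertex} $u_i$. In the region near $u_i$ where, as you correctly note, no other segment of $K_n$ is present, the plane is partitioned by the fan into wedges, and any two non-adjacent wedges are separated by the intervening segments. A simple polygon is a topological disk, so if $O_i$ meets $u_iv_a$ and $u_iv_b$ but must avoid some $u_iv_j$ with $a<j<b$, then $O_i$ must pass around an endpoint of $u_iv_j$. Going around $u_i$ forces a crossing of the vertical line $\ell_U$, which in the strip between $u_1$ and $u_h$ is entirely covered by segments $u_au_b$ (edges that must be kept); going around $v_j$ runs into the analogous fan of edges at $v_j$. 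Making the tubes ``thin enough'' is irrelevant --- the obstruction is topological, not metric. So a single obstacle near $u_i$ can only block a \emph{contiguous} block of the fan, which yields only $O(n^2)$ choices per $u_i$, not $2^{n-h}$, and the count collapses.

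This is exactly the distinction the paper's proof is built around. There, the near-concurrent edges that a single convex obstacle must handle are the edges of a \emph{level}, whose meeting point is an interior \emph{intersection point}, not a graph vertex. After $\varepsilon$-dilation that concurrency is broken into a cap, and part~\ref{item-lemCap2} of Lemma~\ref{lem-cap} guarantees all these edges bound a single \emph{bounded face}; a convex obstacle inside that face can then be chosen to meet any prescribed subset of the incident edges and nothing else. Taking $h$ middle levels of an $\varepsilon$-dilated drawing of $K_{\lceil n/2\rceil,\lfloor n/2\rfloor}$ gives $h$ faces each incident to $\Theta(n)$ edges, hence $2^{\Omega(hn)}$ graphs with $\obs\le h$. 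Your construction mis-transfers this cap machinery from interior intersection points to actual vertices, where it does not apply. To repair the proof you would essentially have to switch from the vertex fans to the level-caps, i.e.\ to the paper's argument.

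A secondary, minor point: you need to ensure the vertex set is in general position (the paper perturbs to achieve this), otherwise $K_P$ is not well-defined as a geometric graph in the usual sense; this is easily fixed but should be stated.
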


This lower bound on $g(h,n)$ is not tight in general.
For example, it is not tight for $h \le o(\log{n})$, as we have $g(h,n) \geq g(1,n)$ and the following simple argument gives $g(1,n) \geq 2^{\Omega(n\log{n})}$.
For a given bijection $f \colon \{1,\dots,\lfloor n/2 \rfloor\} \to \{\lceil n/2 \rceil+1,\dots,n\}$, let $G_f$ be the graph on $\{1,\dots,n\}$ that is obtained from $K_n$ by removing the edges $\{i,f(i)\}$ for every $i \in \{1,\dots,\lfloor n/2 \rfloor\}\}$.
We choose a geometric drawing of $K_n$ where each vertex $i$ is represented by a point $p_i$ such that all line segments $p_ip_{f(i)}$ with $i \in \{1,\dots,\lfloor n/2 \rfloor\}\}$ meet in a common point that is not contained in any other edge of the drawing of $K_n$.
Then we place a single one-point obstacle on this common point and obtain an obstacle representation of $G_f$.
Since the number of the graphs $G_f$ is $(\lfloor n/2 \rfloor)! \geq 2^{\Omega(n\log{n})}$, the super-exponential lower bound on $g(1,h)$ follows.

\subsection{Complexity of faces in arrangements of line segments}
\label{subsec:arrangements}

An \emph{arrangement $\mathcal{A}$ of line segments} is a finite collection of line segments in the plane.
The line segments of $\mathcal{A}$ partition the plane into \emph{vertices}, \emph{edges}, and \emph{cells}.
A vertex is a common point of two or more line segments. 
Removing the vertices from the line segments creates a collection of subsegments which are called edges. 
The cells are the connected components of the complement of the line segments.
A \emph{face} of $\mathcal{A}$ is a closure of a cell.

Note that every geometric drawing of a graph is an arrangement of line segments and vice versa.
The edges of the graph correspond to the line segments of the arrangement and
the vertices of the graph correspond to the endpoints of the line segments.

A line segment $s$ of $\mathcal{A}$ is \emph{incident} to a face $F$ of $\mathcal{A}$ if $s$ and $F$ share an edge of $\mathcal{A}$.
The \emph{complexity of a face} $F$ is the number of the line segments of $\mathcal{A}$ that are incident to $F$.
If $\mathcal{F}$ is a set of faces of $\mathcal{A}$, then the \emph{complexity of $\mathcal{F}$} is the sum of the complexities of $F$ taken over all $F \in \mathcal{F}$.

An \emph{arrangement of lines} is a finite collection of lines in the plane with faces and their complexity defined analogously.

Edelsbrunner and Welzl~\cite{EdelsbrunnerWelzl} constructed an arrangement of $m$ lines having a set of $M$ faces with complexity $\Omega(m^{2/3}M^{2/3} + m)$ for every $m$ and $M \le \binom{m}{2}+1$.
Wiernik and Sharir~\cite{WiernikSharir} constructed an arrangement of $m$ line segments with a single face of complexity $\Omega(m \alpha(m))$.
These two constructions can be combined to provide the lower bound $\Omega(m^{2/3}M^{2/3} + m \alpha(m))$ for the complexity of $M$ faces in an arrangement of $m$ line segments, where $M \le \binom{m}{2}+1$.
The best upper bound for the complexity of $M$ faces in an arrangement of $m$ line segments is $O(m^{2/3}M^{2/3} + m \alpha(m) + m \log{M})$ by Aronov et al.~\cite{Aronov+92}.

Arkin et al.~\cite{Arkin+95} studied arrangements whose line segments share endpoints.
That is, they considered the maximum complexity of a face when we bound the number of endpoints of the line segments instead of the number of the line segments.
They showed that the complexity of a single face in an arrangement of line segments with $n$ endpoints is at most $O(n \log{n})$.
An $\Omega(n \log{n})$ lower bound was then proved by Matou\v{s}ek and Valtr~\cite{MatousekValtr}.

Arkin et al.~\cite{Arkin+95} asked what is the maximum complexity of a set of $M$ faces in an arrangement of line segments with $n$ endpoints.

Since every arrangement of line segments with $n$ endpoints contains at most $\binom{n}{2}$ line segments, the upper bound $O(n^{4/3}M^{2/3}+n^2\alpha(n)+n^2\log{M})$ can be deduced from the bound by Aronov et al.~\cite{Aronov+92}.
Together with the upper bound $O(nM\log{n})$, which follows from the bound by Arkin et al.~\cite{Arkin+95} on the complexity of a single face, we obtain an upper bound $O(\min\{nM\log{n},n^{4/3}M^{2/3}+n^2\log{M}\})$ on the complexity of $M$ faces in an arrangement of line segments with $n$ endpoints.

We give the following lower bound.

\begin{theorem}
\label{thm-arrangements}
For every sufficiently large integer $n$, there is an arrangement $\mathcal{A}$ of line segments with $n$ endpoints such that for every $M$ satisfying  $1\leq M \leq n^4$ there is a set of at most $M$ faces of $\mathcal{A}$ with complexity $\Omega(\min\{nM,n^{4/3}M^{2/3}\})$.
\end{theorem}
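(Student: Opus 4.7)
The plan is to take $\mathcal{A}$ to be an $\varepsilon$-dilated bipartite drawing of $K_{n/2,n/2}$ from Section~\ref{sec:bipartiteDrawings}, for a sufficiently small $\varepsilon>0$. This has $n$ endpoints (split into two tight clusters) and $n^2/4$ segments whose pairwise intersections, within a bounded ``central region'' between the clusters, form an arrangement that behaves combinatorially like $\Theta(n^2)$ lines in general position. The two target bounds meet at $M = n$: for $M \le n$ the target is $\Omega(nM)$, while for $n \le M \le n^4$ the target is $\Omega(n^{4/3}M^{2/3})$.

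For the easier, small-$M$ regime $1 \le M \le n$, I would exhibit $\Omega(n)$ faces of $\mathcal{A}$ of individual complexity $\Omega(n)$ and then select any $M$ of them. The existence of such faces is a structural consequence of the bipartite drawing: the $n^2/4$ segments form a dense ``band'' between the two clusters, and any short transversal across the band meets $\Omega(n)$ consecutive faces, each bounded by $\Omega(n)$ distinct segments. Choosing $M$ of these faces yields complexity $\Omega(nM)$.

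For the large-$M$ regime $n \le M \le n^4$, I would transplant the classical Edelsbrunner--Welzl construction, which produces an arrangement of $m$ lines with a set of $M$ faces of total complexity $\Omega(m^{2/3}M^{2/3})$ whenever $M \le \binom{m}{2}+1$. Setting $m = n^2/4$ yields the target $\Omega(n^{4/3}M^{2/3})$, and the hypothesis $M \le n^4$ keeps $M$ inside the admissible range. To realize this inside $\mathcal{A}$, I would embed a scaled-down copy of their line configuration into a tiny subregion of the central region where our $\varepsilon$-dilated segments locally act as lines; taking $\varepsilon$ small enough guarantees that the prescribed $M$ faces persist in $\mathcal{A}$ with the same asymptotic complexities.

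The main obstacle is the large-$M$ regime: one needs to verify that the Edelsbrunner--Welzl lower bound, stated for lines, is inherited by $\varepsilon$-dilated segments that share only $n$ endpoints. This calls for a continuity/perturbation argument showing that the targeted faces, and the incidences along their boundaries, survive replacing lines by sufficiently close segments, provided we keep everything inside the central region. A secondary point is that the same $\mathcal{A}$ must serve both regimes simultaneously; this is immediate because the Edelsbrunner--Welzl subconfiguration can be shrunk into an arbitrarily small subregion of the central area without disturbing the global band of $\Omega(n)$-complexity faces used in the small-$M$ case.
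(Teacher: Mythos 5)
Your choice of arrangement (an $\varepsilon$-dilated bipartite drawing of $K_{n/2,n/2}$) matches the paper's, and the small-$M$ observation—that $M\le n$ is handled by selecting the $M$ highest-complexity faces from a single witnessing family at $M\approx n$—is exactly how the paper reduces that regime. But the heart of the theorem is the large-$M$ regime, and there your proposal has a genuine gap.

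You propose to ``embed a scaled-down copy of the Edelsbrunner--Welzl line configuration into a tiny subregion of the central region.'' This cannot work as stated, because the arrangement $\mathcal{A}$ is \emph{not} free: once the $n$ endpoints are fixed, all $\Theta(n^2)$ segments are determined, and you have no freedom to place additional lines or segments so as to reproduce the E--W configuration. Conversely, you cannot discard segments either, since all segments of $\mathcal{A}$ are present and will slice through whatever subregion you choose. The constraint ``share only $n$ endpoints'' is precisely what makes this theorem nontrivial; if one were allowed arbitrary segments, the E--W lower bound would already cover it. What actually has to be shown is that this particular, rigidly structured arrangement natively produces many distinct faces of high total complexity. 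The paper does this by a number-theoretic analysis: for each coprime pair $(i,k)$ with $1\le i<k\le K$ it identifies the ``uniform $(i,k)$-crossings'' of the regular drawing, shows that there are at most $kn$ of them while at least $n^2-2kn$ edges participate in one, uses Lemma~\ref{lem-cap}\ref{item-lemCap2} to associate a bounded face $F_c$ of the dilated drawing to each such crossing $c$, and—crucially—proves (via a separate geometric argument) that distinct maximal uniform crossings yield distinct faces. Summing over coprime pairs and using $\sum_{j\le m}\varphi(j)=\Theta(m^2)$ then gives $\le nK^3/2$ faces of total complexity $\Omega(n^2K^2)$, and $K=(M/n)^{1/3}$ finishes. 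Your proposal does not identify which faces of $\mathcal{A}$ to take, why they are distinct, or why they have the claimed total complexity; invoking E--W as a black box does not supply any of this, because E--W's lines are not the segments of $\mathcal{A}$. A continuity/perturbation argument can only transfer properties between nearby arrangements of the \emph{same} segments—it cannot conjure the E--W arrangement inside a fixed bipartite drawing.

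A secondary, smaller issue: the small-$M$ claim that ``any short transversal across the band meets $\Omega(n)$ consecutive faces, each bounded by $\Omega(n)$ distinct segments'' is not justified and is not true of a generic transversal; most faces of the arrangement have bounded complexity. The correct way to get $\Omega(n)$ faces of complexity $\Omega(n)$ is to take the good level-caps of the $\Theta(n)$ middle levels (these are the faces $F_c$ for uniform $(1,2)$-crossings), which is again a special structural feature established via Lemma~\ref{lem-cap}.
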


Whenever $M \geq n \log^{3/2}{n}$, this lower bound matches the upper bound up to a multiplicative constant.
For $M<n\log^{3/2}{n}$, the lower bound differs from the upper bound by at most an $O(\log{n})$ multiplicative factor; see also Table~\ref{tab:arrgBounds}.

\begin{table}
\centering
\begin{tabular}{l|l|l|l}
$M \le O(1)$ & $\Omega(1) \le M \le O(n)$ & $\Omega(n) \le M \le O(n \log^{\frac{3}{2}} n)$ & $\Omega(n \log^{\frac{3}{2}} n) \le M \le O(n^4)$ \\
\hline
$O(n \log n)$~\cite{Arkin+95} & $O(nM \log n)$~\cite{Aronov+92} & $O(n^2 \log M)$~\cite{Aronov+92} & $O(n^{4/3} M^{2/3})$~\cite{Aronov+92} \\
\hline
$\Omega(n \log n)$~\cite{MatousekValtr} & $\Omega(nM)$ & $\Omega(n^{4/3} M^{2/3})$ & $\Omega(n^{4/3} M^{2/3})$ 
\end{tabular}
\caption{A summary of the best known upper and lower bounds on the complexity of $M$ faces in an arrangement of segments with $n$ endpoints.}
\label{tab:arrgBounds}
\end{table}

\section{Dilated bipartite drawings}
\label{sec:bipartiteDrawings}

For a point $p\in \mathbb{R}^2$, let $x(p)$ and $y(p)$ denote the $x$- and the $y$-coordinate of $p$, respectively.
An \emph{intersection point} in a geometric drawing $D$ of a graph $G$ is a common point of two edges of $G$ that share no vertex.

Let $m$ and $n$ be positive integers.
We say that a geometric drawing of $K_{m,n}$  is \emph{bipartite} if the vertices of the same color class of $K_{m,n}$ lie on a common vertical line and not all vertices of $K_{m,n}$ lie on the same vertical line.
For the rest of this section, we let $D$ be a bipartite drawing of $K_{m,n}$ and use $P\colonequals\{p_1,\ldots,p_m\}$ and $Q\colonequals\{q_1,\ldots,q_n\}$ with $y(p_1)<\cdots<y(p_m)$ and $y(q_1)<\cdots<y(q_n)$ to denote the point sets representing the color classes of $K_{m,n}$ in~$D$.
We let $\ell_P$ and $\ell_Q$ be the vertical lines that contain the points of $P$ and $Q$, respectively.
The \emph{width} $w$ of $D$ is $|x(q_1)-x(p_1)|$.
In the following, we assume that $\ell_P$ is to the left of $\ell_Q$ and that $p_1=(0,0)$ and $q_1=(w,0)$.
We set $d_i\colonequals y(p_{i+1})-y(p_i)$ for $i=1,\ldots,m-1$ and $h_j\colonequals y(q_{j+1})-y(q_j)$ for $j=1,\ldots,n-1$.
We call $d_1$ the \emph{left step} of $D$ and $h_1$ the \emph{right step} of $D$.
An example can be found in part~(a) of  Figure~\ref{fig-bipDraw}.

\begin{figure}[ht]
\centering
\includegraphics{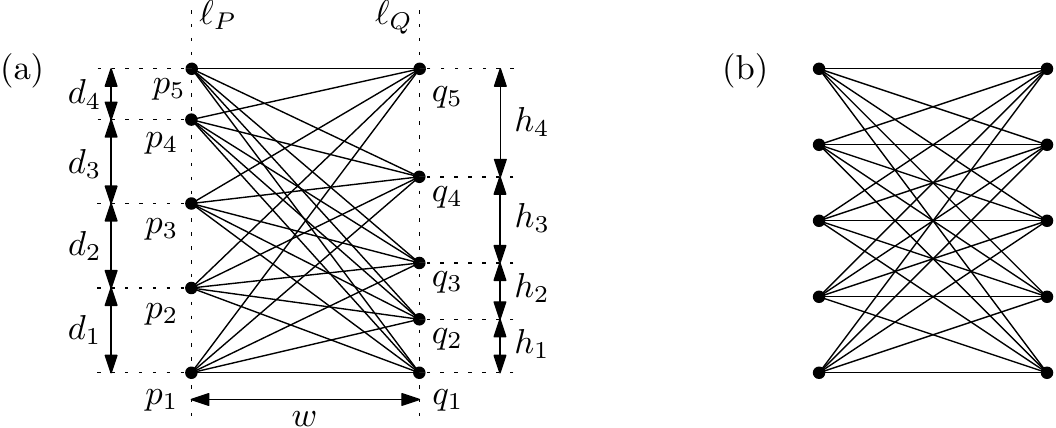}
\caption{(a) A bipartite drawing of $K_{5,5}$ that is not regular. (b) A regular drawing of $K_{5,5}$.
}
\label{fig-bipDraw}
\end{figure}

We say that $D$ is \emph{regular} if we have $d_1=\cdots=d_{m-1}$ and $h_1=\cdots=h_{n-1}$; see part~(b) of  Figure~\ref{fig-bipDraw} for an example.
Note that every regular drawing of $K_{m,n}$ is uniquely determined by its width, left step, and right step.
A \emph{regularization} of a (possibly non-regular) bipartite drawing~$D$ is the regular bipartite drawing of $K_{m,n}$ with the vertices $\pi(p_i) \colonequals (0,(i-1)d_1)$ and $\pi(q_j) \colonequals (w,(j-1)h_1)$ for $i=1,\ldots,m$ and $j=1,\ldots,n$.

For $1\leq k \leq m+n-1$, the \emph{$k$th level} of $D$ is the set of edges $p_iq_j$ with $i+j=k+1$; see Figure~\ref{fig-levels}.
Note that the levels of $D$ partition the edge set of $K_{m,n}$ and that the $k$th level of $D$ contains $\min\{k,m,n,m+n-k\}$ edges.
If $D$ is regular, then, for every $1<k<m+n-1$, the edges of the $k$th level of $D$ share a unique intersection point that lies on the vertical line $\{\frac{d_1}{d_1+h_1}w\}\times \mathbb{R}$.

\begin{figure}[ht]
\centering
\includegraphics{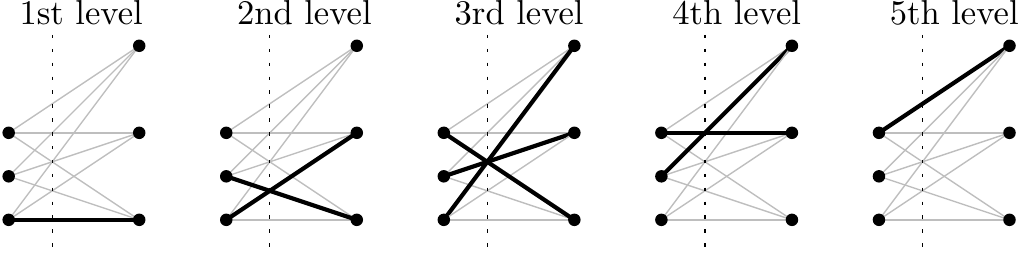}
\caption{The partitioning of the edges of a regular drawing of $K_{3,3}$ into levels. The edges in the same level are denoted by black line segments.
}
\label{fig-levels}
\end{figure}

For an integer $l \geq 2$, an ordered $l$-tuple $(p_{i_1}q_{j_1},\ldots,p_{i_l}q_{j_l})$ of edges of $D$ is \emph{uniformly crossing} if we have $0<i_2-i_1=\cdots=i_l-i_{l-1}$ and $j_2-j_1=\cdots=j_l-j_{l-1}<0$.
In particular, a set of edges forming a level of $D$, ordered by their decreasing slopes, is uniformly crossing.
Note that if $(p_{i_1}q_{j_1},\ldots,p_{i_l}q_{j_l})$ is uniformly crossing, then the edges $\pi(p_{i_1})\pi(q_{j_1}),\ldots,\pi(p_{i_l})\pi(q_{j_l})$ of the regula\-rization of $D$ share a common intersection point, which we call the \emph{meeting point of $(p_{i_1}q_{j_1},\ldots,p_{i_l}q_{j_l})$}.
Note that this point does not necessarily lie in $D$; see Figure~\ref{fig-meetingPoint}.
In the other direction, if $D$ is regular and $(e_1,\ldots,e_l)$ is a maximal set of edges of $D$ that share a common intersection point and are ordered by their decreasing slopes, then  $(e_1,\ldots,e_l)$ is uniformly crossing.

\begin{figure}[ht]
\centering
\includegraphics{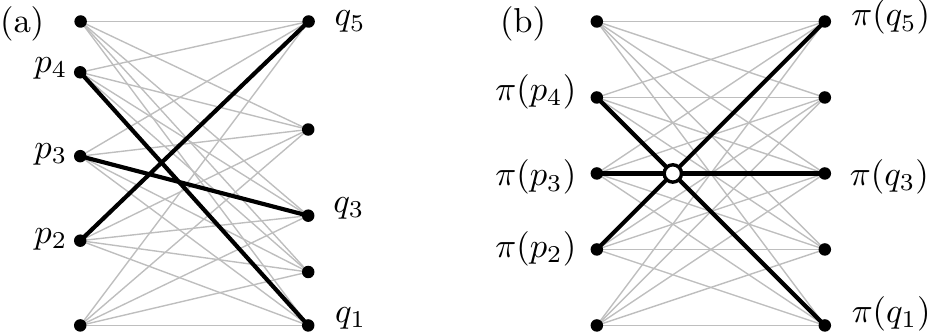}
\caption{(a) A uniformly crossing triple $T=(p_2q_5,p_3q_3,p_4q_1)$ in a bipartite drawing $D$ of $K_{5,5}$. (b) The meeting point of $T$ in the regularization of $D$.
}
\label{fig-meetingPoint}
\end{figure}

Let $\varepsilon>0$ be a real number. 
We say that $D$ is \emph{$\varepsilon$-dilated} if we have $d_1<\cdots<d_{m-1}<(1+\varepsilon)d_1$ and $h_1<\cdots<h_{n-1}<(1+\varepsilon)h_1$; see part~(a) of Figure~\ref{fig-cap}.

In a geometric drawing $D'$ of a (not necessarily bipartite) graph, let $(e_1,\ldots,e_l)$ be an ordered $l$-tuple of edges of $D'$ with finite slopes 
and such that $e_i$ and $e_{i+1}$ share an intersection point $r_i$ for $i=1,\ldots,l-1$.
We say that $(e_1,\ldots,e_l)$ \emph{forms a cap}, if $x(r_1)<\cdots < x(r_{l-1})$ and the slopes of $e_1,\ldots,e_l$ are strictly decreasing.
For every $i$, the segment $e_i$ is the graph of a linear function $f_i \colon \pi(e_i) \rightarrow \mathbb{R}$, where $\pi(e_i)$ is the projection of $e_i$ on the $x$-axis.
The \emph{lower envelope} of $(e_1,\ldots,e_l)$ is the graph of the function defined on the union of these intervals as the pointwise minimum 
of the functions $f_i$ and is undefined elsewhere.
That is, the lower envelope is a union of finitely many piecewise linear curves, called \emph{components}.
The \emph{cap} $C$ formed by $(e_1,\ldots,e_l)$ is the component of the lower envelope that contains $r_1,\ldots,r_{l-1}$.
The points $r_i$ are \emph{vertices of $C$} and $e_1\cap C,\ldots,e_l\cap C$ are \emph{edges of~$C$}; see part~(b) of Figure~\ref{fig-cap}.
A cap $C$ is \emph{good in $D'$}, if the edges of $C$ are incident to the same bounded face of $D'$ or if $C$ has only one edge.
If $D'$ is bipartite and the edges of one of its levels form a cap $C$, then we call $C$ a \emph{level-cap of $D'$}.

\begin{figure}[ht]
\centering
\includegraphics{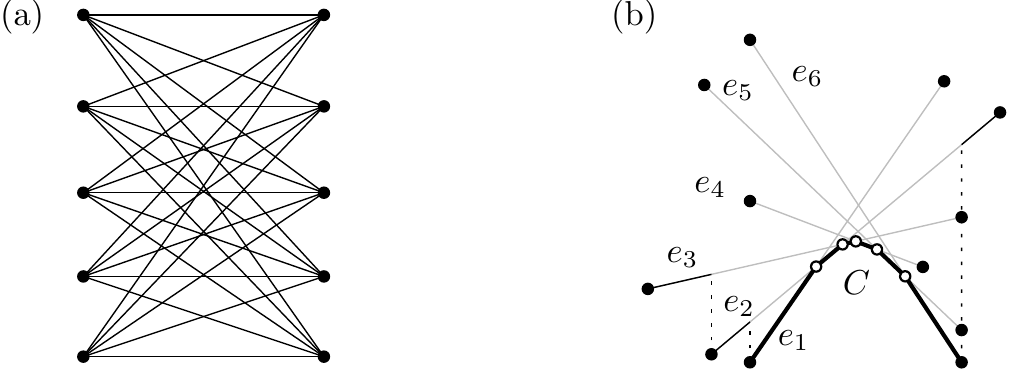}
\caption{(a) An $\varepsilon$-dilated drawing of $K_{5,5}$. (b) An example of a cap $C$ formed by $(e_1,\dots,e_6)$ with vertices denoted by empty circles.
Note that the lower envelope of $(e_1,\dots,e_6)$ is not connected.}
\label{fig-cap}
\end{figure}

The following lemma is crucial in the proofs of all our main results.

\begin{lemma}
Let $D$ be a bipartite drawing of $K_{m,n}$.
\label{lem-cap}
\begin{enumerate}[label=(\roman*)]
\item\label{item-lemCap1} If $D$ satisfies $d_1< \cdots < d_{m-1}$ and $h_1 < \cdots< h_{n-1}$, then, for every $l\geq 2$, every uniformly crossing $l$-tuple of edges of $D$ forms a cap.
\item\label{item-lemCapNew}
For all $w,d_1,h_1,\delta \in \mathbb{R}^+$ and $m,n \in \mathbb{N}$, there is an $\varepsilon_0 = \varepsilon_0(m,n,w,d_1,h_1,\delta) > 0$ such that for every positive $\varepsilon \le \varepsilon_0$ the following statement holds.
If $D$ is an $\varepsilon$-dilated bipartite drawing of $K_{m,n}$ with width $w$, left step $d_1$, and right step $h_1$, then the intersection point between any two edges $p_iq_j$ and $p_{i'}q_{j'}$ of $D$ lies in distance less than $\delta$ from the intersection point $\pi(p_i)\pi(q_j) \cap \pi(p_{i'})\pi(q_{j'})$.
\item\label{item-lemCap2}{For all $w,d_1,h_1 \in \mathbb{R}^+$ and $m,n \in \mathbb{N}$, there is an $\varepsilon_0 = \varepsilon_0(m,n,w,d_1,h_1) > 0$ such that for every positive $\varepsilon \le \varepsilon_0$  the following statement holds.
If $D$ is an $\varepsilon$-dilated bipartite drawing of $K_{m,n}$ with width $w$, left step $d_1$, and right step $h_1$, then for every $l\geq 2$ every uniformly crossing $l$-tuple of edges of $D$ forms a good cap in $D$.}
\end{enumerate}
\end{lemma}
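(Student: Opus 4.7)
The plan is to handle parts (i) and (ii) by direct calculation and then combine them with a local structural analysis for part (iii).

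For part (i), I would compute the intersection $r_k$ of two consecutive edges $e_k=p_{i_k}q_{j_k}$ and $e_{k+1}=p_{i_{k+1}}q_{j_{k+1}}$ of the tuple; parametrizing each edge as a segment between its left and right endpoints, a short calculation gives $x(r_k)=wA_k/(A_k+B_k)$, where $A_k=y(p_{i_{k+1}})-y(p_{i_k})$ is a sum of $s$ consecutive step sizes $d_i$ and $B_k=y(q_{j_k})-y(q_{j_{k+1}})$ is a sum of $t$ consecutive step sizes $h_j$ (with $s$ and $t$ the common positive integer steps of the tuple). As $k$ increases, $A_k$ is a sum over later indices and $B_k$ a sum over earlier indices, so the assumed strict monotonicity of the $d_i$'s and $h_j$'s gives $A_k$ strictly increasing and $B_k$ strictly decreasing, whence $x(r_k)$ is strictly increasing. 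The slopes $(y(q_{j_k})-y(p_{i_k}))/w$ are strictly decreasing in $k$ by the same monotonicity, so both cap conditions hold. For part (ii), I observe that each intersection coordinate is a rational, hence continuous, function of the step sizes $d_i,h_j$; in an $\varepsilon$-dilated drawing these steps converge uniformly to their regular values $d_1,h_1$ as $\varepsilon\to 0$, so every intersection point in $D$ converges to its counterpart in the regularization $\pi(D)$. Since $K_{m,n}$ has only finitely many edge pairs, a single $\varepsilon_0$ works for all of them.

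For part (iii), I would combine the first two parts with a local analysis around the meeting point. Let $T=(e_1,\dots,e_l)$ be uniformly crossing, let $P$ be its meeting point in $\pi(D)$, and let $\mathcal{L}\supseteq T$ be the set of all edges of $D$ whose regularizations pass through $P$. In $\pi(D)$ every edge not in $\mathcal{L}$ and every intersection point other than $P$ is at a uniform positive distance from $P$, so by part (ii) I can choose $\varepsilon_0$ so small that, for every $\varepsilon\le\varepsilon_0$, the $\binom{|\mathcal{L}|}{2}$ pairwise intersections among $\mathcal{L}$-edges in $D$ all lie in an arbitrarily small ball $B$ around $P$, while no edge of $D\setminus\mathcal{L}$ enters $B$. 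Hence $D\cap B$ is determined entirely by $\mathcal{L}$. I will then exhibit the good face $F$ by cases: if $T=\mathcal{L}$, take $F$ to be the face of $D$ just below $P$, a perturbation of the ``south'' cell of $\pi(D)$ at $P$, which in $D$ is bounded above by $C$ and below by edges of $D\setminus\mathcal{L}$ outside $B$; if $T\subsetneq\mathcal{L}$, take $F$ to be the open region in $B$ strictly between $C$ (the lower envelope of $T$) and the upper envelope $U$ of the edges in $\mathcal{L}\setminus T$. By the envelope definitions, every edge of $\mathcal{L}$ lies either at or above $C$ or at or below $U$, and $B$ contains no edges of $D\setminus\mathcal{L}$, so this region is free of edges of $D$ and is therefore a face of $D$.

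The hard part will be the non-maximal case $T\subsetneq\mathcal{L}$: verifying that the region between $C$ and $U$ is non-empty, connected, bounded, and that each cap edge $e_i\cap C$ shares an arrangement edge with $F$. This is where the $\varepsilon$-dilated hypothesis (as opposed to an arbitrary perturbation) is essential: it forces $C$ and $U$ to behave as small, controlled perturbations of the single pencil of $\mathcal{L}$ through $P$, so that $C$ lies strictly above $U$ between the first and last crossings of a $T$-edge with an $(\mathcal{L}\setminus T)$-edge on the cap, closing off a lens-shaped face in $B$ whose upper boundary contains an arrangement edge of each $e_i\cap C$ between the consecutive cap vertices $r_{i-1}$ and $r_i$.
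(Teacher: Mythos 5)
Your treatment of parts (i) and (ii) is sound. For (i) you compute $x(r_k)=wA_k/(A_k+B_k)$ directly and use monotonicity of $A_k$ and $B_k$; the paper instead compares the triple $e_k,e_{k+1},e_{k+2}$ via the midpoints of $p_{i_k}p_{i_{k+2}}$ and $q_{j_k}q_{j_{k+2}}$, but both arguments are correct. Part (ii) is the same continuity argument as the paper's.

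The gap is in part (iii), in the case $T\subsetneq\mathcal{L}$, and it is a real one: the region you propose as the face does not exist in general. Write $\mathcal{L}=(e_1,\dots,e_r)$ for the edges whose regularizations pass through $P$, ordered by decreasing slope, so that by part (i) the consecutive crossings $r_k=e_k\cap e_{k+1}$ satisfy $x(r_1)<\dots<x(r_{r-1})$ and the lower envelope of $\mathcal{L}$ runs $e_1\to r_1\to e_2\to\dots\to e_r$. Take $r=5$ and $T=(e_1,e_3,e_5)$, a uniformly crossing sub-triple (this already occurs for the middle level of $K_{5,5}$, e.g.\ $T=(p_1q_5,p_3q_3,p_5q_1)$ inside $\mathcal{L}=(p_1q_5,\dots,p_5q_1)$). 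For $x\in\bigl(x(r_2),x(r_3)\bigr)$ the lower envelope of $\mathcal{L}$ coincides with $e_3$, so $e_3(x)<e_2(x)$ and $e_3(x)<e_4(x)$; hence the lower envelope $C$ of $T$ satisfies $C(x)\le e_3(x)<\max\{e_2(x),e_4(x)\}=U(x)$, i.e.\ $C$ lies strictly \emph{below} $U$, not above it. (In fact $C<U$ throughout the neighbourhood of $P$.) So the set $\{U<y<C\}$ is empty, and the $\varepsilon$-dilation cannot rescue the claim --- it is precisely what enforces the ordering $x(r_1)<\dots<x(r_4)$ that traps $e_3$ beneath $e_2$ and $e_4$.

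The fix is also the simplification you were looking for: no case split is needed. The paper always takes the single face $K$ of $D$ lying just below the cap $C_{\mathcal{L}}$ formed by \emph{all} of $\mathcal{L}$, exactly your $T=\mathcal{L}$ face. Each $e_a\in\mathcal{L}$ contributes the arrangement edge of $C_{\mathcal{L}}$ between $r_{a-1}$ and $r_a$ to the boundary of $K$, and each such piece is contained in the corresponding cap edge $e_a\cap C_T$ of the (generally larger) cap $C_T$, since $C_T$ is a lower envelope over fewer edges. Hence every edge of $C_T$ shares an arrangement edge with $K$, which is exactly what ``good cap'' requires, for every uniformly crossing $T\subseteq\mathcal{L}$ simultaneously. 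The ball $B$ and the choice of $\delta$ as half the minimum distance between intersection points of the regularization are there only to guarantee that no edge outside $\mathcal{L}$ crosses an $\mathcal{L}$-edge inside $B$, so that $K$ is a genuine bounded face with all of $r_1,\dots,r_{r-1}$ on its boundary.
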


\begin{proof}
For part~\ref{item-lemCap1}, let $(e_1,\ldots,e_l)$ be a uniformly crossing $l$-tuple of edges of $D$ with $e_k\colonequals p_{i_k}q_{j_k}$ for every $k=1,\ldots,l$.
Consider edges $e_k$, $e_{k+1}$, $e_{k+2}$ and let $r_k$ and $r_{k+1}$ be the points $e_k \cap e_{k+1}$ and $e_{k+1}  \cap e_{k+2}$, respectively.
The points $r_k$ and $r_{k+1}$ exist, as $y(p_{i_k})<y(p_{i_{k+1}})<y(p_{i_{k+2}})$ and $y(q_{j_{k+2}})<y(q_{j_{k+1}})<y(q_{j_k})$.

Consider the midpoint $p$ of $p_{i_k}p_{i_{k+2}}$ and the midpoint $q$ of $q_{j_k}q_{j_{k+2}}$.
Since $(e_1,\ldots,e_l)$ is uniformly crossing and $d_1< \cdots < d_{m-1}$ and $h_1< \cdots< h_{n-1}$, we have $y(p_{i_{k+1}}) < y(p)$ and $y(q_{j_{k+1}}) < y(q)$; see part~(a) of Figure~\ref{fig-lemma}.
The edges $pq$, $e_k$, and $e_{k+2}$ share a common point that lies above $e_{k+1}$.
Since $r_k$ and $r_{k+1}$ lie on $e_{k+1}$, we obtain $x(r_k)<x(r_{k+1})$.
The slopes of $e_k$, $e_{k+1}$, $e_{k+2}$ are strictly decreasing, thus $(e_1,\ldots,e_l)$ forms a cap.

\begin{figure}[ht]
\centering
\includegraphics{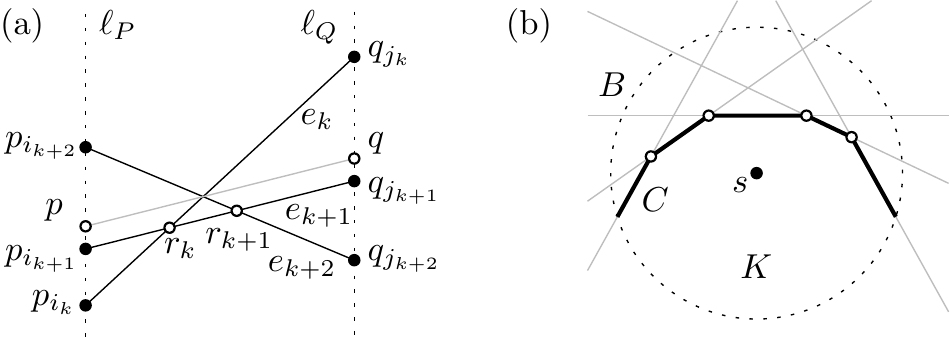}
\caption{ (a) A situation in the proof of part~\ref{item-lemCap1} of Lemma~\ref{lem-cap}. (b) A situation in the proof of part~\ref{item-lemCap2} of Lemma~\ref{lem-cap}.}
\label{fig-lemma}
\end{figure}

Part~\ref{item-lemCapNew} follows from the fact that for fixed $w$, $d_1$, $h_1$, all $\varepsilon'$-dilated drawings of $K_{m,n}$ with width $w$, left step $d_1$, and right step $h_1$ converge to their common regularization as $\varepsilon'>0$ tends to zero.

We now show part~\ref{item-lemCap2}.
We let $\delta_{m,n}(w,d_1,h_1)=\delta>0$ be the half of the minimum distance between two intersection points of the regular drawing $D'$ of $K_{m,n}$ with width $w$, left step $d_1$, and right step $h_1$.
We take $\varepsilon = \varepsilon_0(m,n,w,d_1,h_1,\delta)$ from part~\ref{item-lemCapNew} and we let $D$ be an $\varepsilon$-dilated drawing of $K_{m,n}$ with width $w$, left step $d_1$, and right step $h_1$.
According to~\ref{item-lemCap1}, every uniformly crossing $l$-tuple $(e_1,\ldots,e_l)$ of edges of $D$ forms a cap.

Let $s$ be the meeting point of $\{\pi(e_1),\ldots,\pi(e_l)\}$ in the regular drawing $D'$.
Let $B$ be the open disc with the center $s$ and the radius $\delta$.
Let $R$ be the set of edges that intersect some other edge inside $B$.
By the choice of $\delta$, $R$ is the set of edges that correspond to the edges of the regular drawing $D'$ that contain $s$.
In particular, $\{e_1,\ldots,e_l\} \subseteq R$ (note that $\{e_1,\dots,e_l\}$ might be, for example, a subset of a level of $D$ and thus we do not necessarily have $\{e_1,\ldots,e_l\} = R$).
By part~\ref{item-lemCap1}, the edges of $R$ form a cap.
All the vertices of this cap lie inside $B$.
Since no edge outside $R$ crosses any edge of $R$ inside $B$, there is a face $K$ in $D$ incident to all edges of $R$.
Since $l \ge 2$, $R$ contains at least two edges and thus $K$ is bounded; see part~(b) of Figure~\ref{fig-lemma}.
\end{proof}

\section{Proof of Theorem~\ref{thm-BipartiteUpperBound}}
\label{sec:proofThmBipartite}

Let $G\subseteq K_{m,n}$ be a bipartite graph and $\overline{G}$ be its complement.
Using Lemma~\ref{lem-cap}, we can easily show $\obsc(\overline{G}) \leq m+n-1$.
Let $\varepsilon>0$ be chosen as in part~\ref{item-lemCap2} of Lemma~\ref{lem-cap} for $K_{m,n}$ and $w=d_1=h_1=1$.
Consider an $\varepsilon$-dilated drawing $D$ of $K_{m,n}$ with $w=d_1=h_1=1$, $p_1=(0,0)$, and $q_1=(1,0)$.
Since edges of every level of $D$ are uniformly crossing, part~\ref{item-lemCap2} of Lemma~\ref{lem-cap} implies that the edges of the $k$th level of $D$ form a good level-cap $C_k$ in $D$ for every $1\leq k \leq m+n-1$.
That is, there is a bounded face $F_k$ of $D$ such that each edge of $C_k$ is incident to $F_k$ or $C_k$ contains only one edge.

For every integer $k$ such that $C_k$ contains a non-edge of $\overline{G}$, we construct a single convex obstacle~$O_k$.
If $C_k$ contains only one edge $e$, the obstacle $O_k$ is an arbitrary inner point of $e$.
Otherwise every edge $p_iq_{k+1-i}$ of the $k$th level of $D$ shares a line segment $s_k^i$ of positive length with $F_k$.
The obstacle $O_k$ is defined as the convex hull of the midpoints of the line segments $s_k^i$ where $p_iq_{k+1-i}$ is not an edge of $\overline{G}$; see Figure~\ref{fig-obstacle}.

\begin{figure}[ht]
\centering
\includegraphics{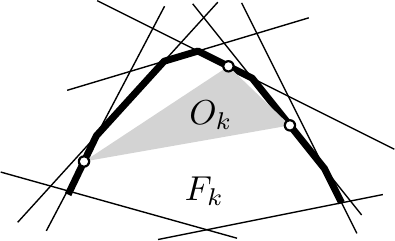}
\caption{Placing a convex obstacle $O_k$ that blocks three edges of $K_{m,n}$. }
\label{fig-obstacle}
\end{figure}

The levels partition the edge set of $K_{m,n}$, therefore we block every non-edge of $\overline{G}$.
Since every bounded face of $D$ is convex, we have $O_k \subseteq F_k$. 
Therefore no edge of $\overline{G}$ is blocked and we obtain an obstacle representation of $\overline{G}$.
In total, we produce at most $m+n-1$ obstacles.

To show $\obs_c(G) \leq m+n-1$, we proceed analogously as above, except the vertices of $D$ are suitably perturbed before obstacles $O_k$ are defined, which allows to add two (long and skinny) convex obstacles $O_P$ and $O_Q$ blocking all the edges $p_ip_{i'}$ and $q_jq_{j'}$, respectively. 
The addition of the obstacles $O_P$ and $O_Q$ may be compensated by using a single convex obstacle to block non-edges in the first and the second level and in the $(m+n-2)$nd and the $(m+n-1)$st level.

If $G$ is a split graph, then we add only one of the obstacles $O_P$ and $O_Q$ depending on whether $P$ or $Q$, respectively, represents an independent set in $G$.

\section{Proof of Theorem~\ref{thm-GeneralUpperBound}}
\label{sec:proofThmGeneral}

\begin{proof}
We show that the convex obstacle number of every graph $G$ on $n$ vertices is at most $n\lceil \log{n}\rceil-n+1$.
The high-level overview of the proof is as follows.
We partition the edges of~$G$ to edge sets of $O(n)$ induced bipartite subgraphs of $G$ by iteratively partitioning the vertex set of $G$ into two (almost) equal parts and considering the corresponding induced bipartite subgraphs of $G$.
For every $j=0,\ldots,\lfloor \log{n}\rfloor$, the number of such bipartite subgraphs of size about $n/2^j$ is $2^j$.
Then we construct an obstacle representation of $G$ whose restriction to every such bipartite subgraph resembles the obstacle representation from the proof of Theorem~\ref{thm-BipartiteUpperBound}.
Since the obstacle representation of every bipartite subgraph of size about $n/2^j$ uses about $n/2^j$ obstacles, we have $O(n\log{n})$ obstacles in total.

Let $m$ be a positive integer and let $S$ be a finite set of $m$ points on a vertical line.
The \emph{bottom half of $S$} is the set of the first $\lceil m/2 \rceil$ points of $S$ in the ordering of $S$ by increasing $y$-coordinates.
The set of $\lfloor m/2\rfloor$ remaining points of $S$ is called the \emph{top half of $S$}.

Let $\delta \in (0,1/8)$ and let $\varepsilon>0$ be a number smaller than $\varepsilon_0$ from part~\ref{item-lemCap2} of~Lemma~\ref{lem-cap} for $K_{n,n}$ and $w=d_1=h_1=1$.
We also assume that $\varepsilon$ is smaller than $\varepsilon_0$ from part~\ref{item-lemCapNew} of~Lemma~\ref{lem-cap} for $K_{n,n}$, $w=d_1=h_1=1$, and $\delta$.
Let $D$ be an $\varepsilon$-dilated bipartite drawing of $K_{n,n}$ with width, left step, and right step equal to 1 and with $d_i=h_i$ for every $i=1,\ldots,n-1$.
We let $P\colonequals\{p_1,\ldots,p_n\}$ and $Q\colonequals\{q_1,\ldots,q_n\}$ be the color classes of $D$ ordered by increasing $y$-coordinates such that $p_1=(0,0)$ and $q_1=(1,0)$.
By part~\ref{item-lemCap2} of Lemma~\ref{lem-cap}, edges of each level of $D$ form a good cap in $D$.
For the rest of the proof, the $y$-coordinates of all points remain fixed.

Let $L$ be a level of $D$ with at least two edges.
Since the edges of $L$ form a good cap, there is a face $F_L$ of $D$ such that all edges of $L$ are incident to $F_L$. 
If $L$ contains an edge $p_iq_i$ for some $1\leq i \leq n$, we let $\ell_L$ be the horizontal line containing $p_iq_i$.
Otherwise $L$ contains edges $p_iq_{i-1}$ and $p_{i-1}q_i$ for some $1 < i \leq n$ and we let $\ell_L$ be the horizontal line $\mathbb{R} \times \{(y(p_{i-1})+y(p_i))/2\}$.
No vertex of the level-cap formed by the edges of $L$ lies strictly above $\ell_L$ and thus whenever two edges of $L$ intersect above $\ell_L$, one of them has a positive slope and the other one negative.
Thus there is $\alpha_L=\alpha_L(\varepsilon)>0$ such that every edge of $L$ with positive slope is incident to a part of $F_L$ that is in the vertical distance larger than $\alpha_L$ below $\ell_L$; see Figure~\ref{fig-alfa}.
We choose $\alpha=\alpha(\varepsilon)$ to be the minimum of $\alpha_L$ over all levels $L$ of $D$ with at least two edges.
We may assume $\alpha \leq \delta$.
Note that $\alpha$ depends only on~$\varepsilon$.

\begin{figure}[ht]
\centering
\includegraphics{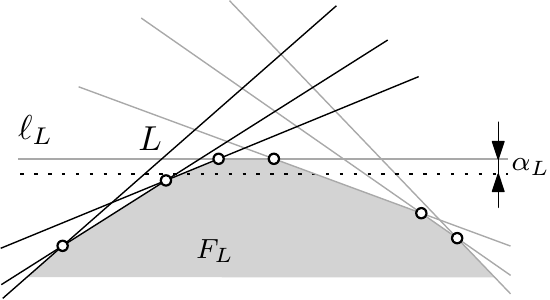}
\caption{All edges of a level $L$ with positive slope are incident to a part of a face $F_L$ strictly below $\ell_L$.
Empty circles represent vertices of the level-cap formed by the edges of the level $L$.
Grey area represents the face $F_L$.}
\label{fig-alfa}
\end{figure}

As the first step in our construction, we let $D_1$ be the drawing obtained from $D$ by removing the top half of $P$ and the bottom half of $Q$.
We forget about the part $D \setminus D_1$, as it will not be used in the obstacle representation of $G$.
We use $P^1_1$ and $P^2_1$ to denote the left and the right color class of $D_1$, respectively.
We map the vertices of $G$ to the vertices of $D_1$ arbitrarily.
Let $\mathcal{C}_1$ be the set of the level-caps of $D_1$.
Since every level-cap in $D$ is good in $D$, every cap in $\mathcal{C}_1$ is good in $D_1$.
Let $V_1^1$ be the vertical strip between  $P^1_1$ and $P^2_1$.

We now give a brief overview of the next steps.
The drawing $D_1$ is the first step towards making an obstacle representation of $G$.
In fact, we can now block a large portion of non-edges of $G$ by placing obstacles in $D_1$ as in the proof of Theorem~\ref{thm-BipartiteUpperBound}.
Then we take care of the edges between vertices in the left color class $P^1_1$ of $K_{\lceil n/2 \rceil,\lfloor n/2 \rfloor}$ as follows (edges between vertices in the right color class $P^2_1$ of $K_{\lceil n/2 \rceil,\lfloor n/2 \rfloor}$ are dealt with analogously).
We slightly shift the top half of $P^1_1$ horizontally to the right.
Only some of the edges of a copy of $K_{\lceil \lceil n/2 \rceil/2 \rceil,\lfloor \lceil n/2 \rceil/2 \rfloor}$ between the top and the bottom half of $P^1_1$ belong to $G$.
In the same way as in the bipartite case, we place convex obstacles along the level-caps of this copy.
To take care of the edges between vertices in the same color class of $K_{\lceil \lceil n/2 \rceil/2 \rceil,\lfloor \lceil n/2 \rceil/2 \rfloor}$, and for each of the color classes we proceed similarly as above. 

For an integer $j$ with $2 \leq j \leq \lceil \log{n} \rceil$, we now formally describe the $j$th step of our construction.
Having chosen the drawing $D_{j-1}$, the set $\mathcal{C}_{j-1}$, and point sets $P_{j-1}^1,\ldots,P_{j-1}^{2^{j-1}}$, we define $P_j^1,\ldots,P_j^{2^j}$ as follows.
For $1\leq k \leq 2^{j-1}$, let $P_j^{2k-1}$ be the bottom half of $P_{j-1}^k$ and let $P_j^{2k}$ be the top half of $P_{j-1}^k$.
Let $\varepsilon_j>0$ be a small real number to be specified later.
If $k$ is odd and $|P_{j-1}^{k}|>1$, we move the points from $P_j^{2k}$ to the right by $\varepsilon_j$.
If $k$ is even and $|P_{j-1}^{k}|>1$, we move the points from $P_j^{2k-1}$ to the left by $\varepsilon_j$; see Figure~\ref{fig-construction} for an illustration.
This modified drawing is $D'_{j-1}$ and $\mathcal{C}_{j-1}$ is transformed into $\mathcal{C}'_{j-1}$.

\begin{figure}[ht]
\centering
\includegraphics{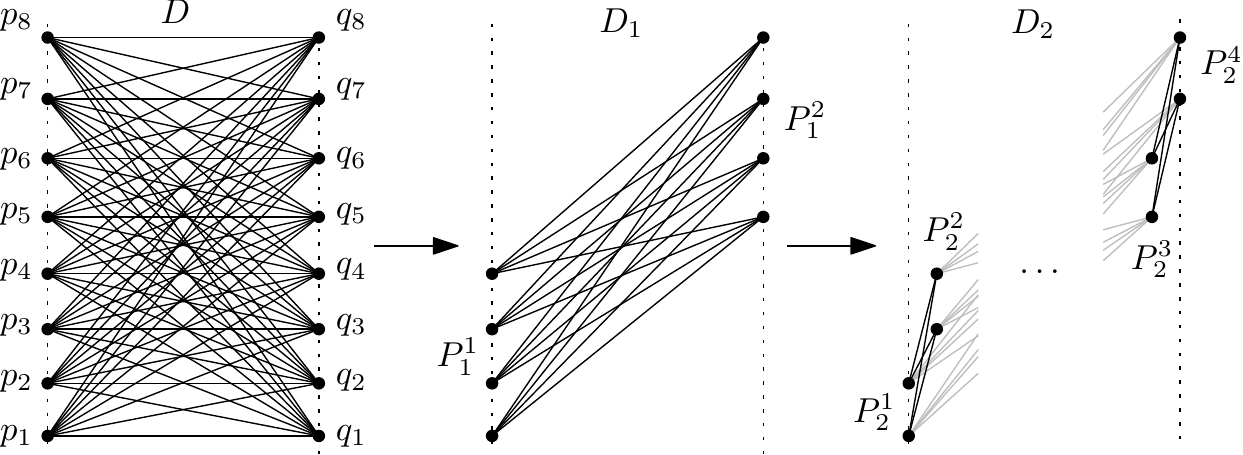}
\caption{The initial drawing $D$ and the first two iterations in the construction of the drawing of $K_8$ in the proof of Theorem~\ref{thm-GeneralUpperBound}.
The edges added in the current iteration are denoted by black segments. 
Note that the vertical strip between $P^1_2$ and $P^4_2$ in $D_2$ is the same as the vertical strip between $P^1_1$ and $P^2_1$ in $D_1$.}
\label{fig-construction}
\end{figure}

For $1\leq k \leq 2^{j-1}$, we add all edges between points from $P_j^{2k-1}$ and $P_j^{2k}$ to create a bipartite drawing $D_j^k$ of $K_{a,b}$ for some integers $a,b$ satisfying $|a-b| \leq 1$ and $a,b \leq \lceil n/2^j \rceil$.
Let $V_j^k$ be the vertical strip between $P_j^{2k-1}$ and $P_j^{2k}$.
That is, $V_j^k$ contains $D_j^k$.
We let $\mathcal{C}_j$ be the union of $\mathcal{C}'_{j-1}$ with a set of level-caps of the drawings $D_j^k$ for $1\leq k \leq 2^{j-1}$.
We also set $D_j\colonequals D_j^1 \cup \cdots \cup D^{2^{j-1}}_j \cup D'_{j-1}$.

We choose $\varepsilon_j$ small enough so that each good cap $C\in\mathcal{C}_{j-1}$ is transformed to a good cap $C' \in \mathcal{C}'_{j-1}$.
Such $\varepsilon_j$ exists, as every geometric drawing of a graph is compact and the distance of two points is a continuous function.

We also choose $\varepsilon_j$ small enough so that the following holds for every edge $e$ of $D'_{j-1}$. Let $\gamma \le j-1$ be the smallest index such that $e \in D'_{\gamma}$. For every vertical strip $V_{j'}^{k'}$ intersecting $e$ and with $\gamma < j'\le j$ and $1 \leq k'\leq 2^{j'-1}$, the portion of $e$ in the vertical strip $V_{j'}^{k'}$ is contained in the horizontal strip $\mathbb{R}\times (y(p)-\alpha,y(p)+\alpha)$ for some endpoint $p$ of $e$.
This can be done, as $\alpha$ depends only on $\varepsilon$ and the endpoints of $e$ move by at most $\varepsilon_j$.
We also use the facts that the vertical strips $V_{j'}^{k'}$, for $1 \le j'< j$ and $1 \leq k'\leq 2^{j'-1}$, do not change during the translations by $\varepsilon_j$ and that each vertical strip $V_{j'}^{k'}$, for $2 \leq  j' \leq j$ and $1 \leq k'\leq 2^{j'-1}$, is contained in the vertical strip $V_{j'-1}^{\lceil k'/2 \rceil}$; see Figure~\ref{fig-shift} for an illustration.

\begin{figure}[ht]
\centering
\includegraphics{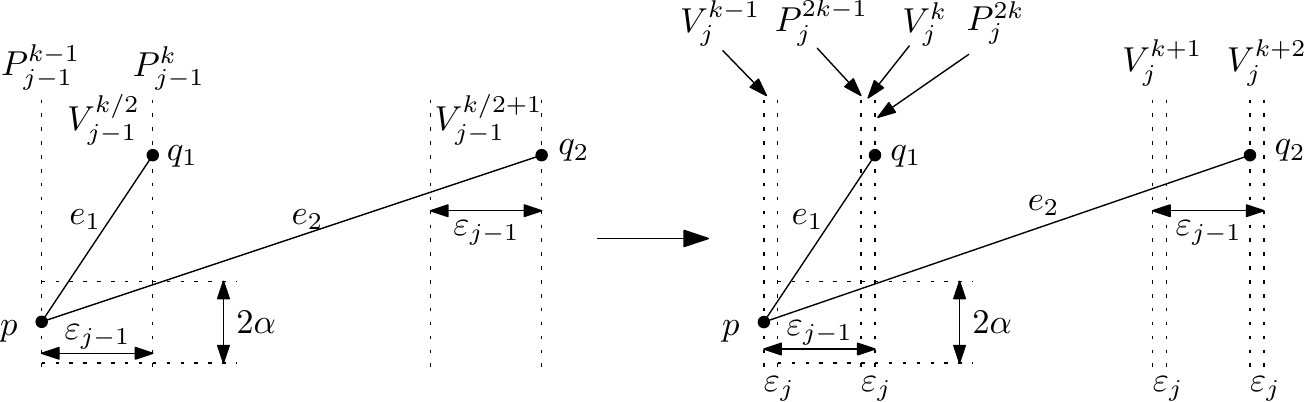}
\caption{An example of conditions posed on the edges $e \in D'_{j-1}$ while choosing $\varepsilon_j$. In the example, $j\ge 2$, $k$ is even, $e_1 \in D'_{j-1}$ and $e_2 \in D'_{j-2}$.
The portions of $e_1$ in $V_j^{k-1}$ and $V_j^{k}$ are contained in the horizontal strips of height $2 \alpha$ around $p$ and $q_1$, respectively.
The portions of $e_2$ in $V_{j-1}^{k/2}$, $V_j^{k-1}$, and $V_j^{k}$ are contained in the horizontal strips of height $2 \alpha$ around $p$.
The portions of $e_2$ in $V_{j-1}^{k/2+1}$, $V_j^{k+1}$, and $V_j^{k+2}$ are contained in the horizontal strips of height $2 \alpha$ around $q_2$.
}
\label{fig-shift}
\end{figure}

After $\lceil\log{n}\rceil$ steps, the drawings $D_{\lceil\log{n}\rceil}^k$ contain at most two vertices and the construction stops.
We show that we can add at most $ n\lceil \log{n}\rceil -n+1$ convex obstacles to the drawing $D_{\lceil \log{n}\rceil}$ to obtain an obstacle representation of $G$.

For $2 \leq j \leq \lceil \log{n} \rceil$ and $1 \leq k \leq 2^{j-1}$, let $f_{j,k}\colon \mathbb{R}^2\to \mathbb{R}^2$ be the affine mapping $f_{j,k}(x,y)\colonequals(x/\varepsilon_j-c_{j,k},y)$ where $c_{j,k}\in\mathbb{R}$ is chosen such that the left color class of $f_{j,k}(D^k_j)$ lies on $\{0\} \times \mathbb{R}$.
Note that the drawing $f_{j,k}(D^k_j)$ is contained in the drawing $D$ and thus edges of the levels of $f_{j,k}(D^k_j)$ form good caps in $f_{j,k}(D^k_j)$; see Figure~\ref{fig-affine}.
Since $f_{j,k}$ does not change the edge-face incidences in $D^k_j$, edges of the levels of $D^k_j$ form good caps in $D^k_j$.

Let $C$ be a level-cap of $D^k_j$.
Let $L$ be the level of $f_{j,k}(D^k_j)$ whose edges form the level-cap $f_{j,k}(C)$ of $f_{j,k}(D^k_j)$.
Edges of $L$ are also edges of a level $L'$ of $D$ and we let $\ell_C$ be the horizontal line $\ell_{L'}$.
All edges of $L$ have positive slope in $D$.
Thus it follows from the definition of $\alpha$ that there is a bounded face of $f_{j,k}(D^k_j)$ such that all edges of $f_{j,k}(C)$ are incident to the part of this face below $\ell_C$ in the vertical distance larger than $\alpha$.
Since $f_{j,k}$ does not change the $y$-coordinates, we get that for every level-cap $C$ of $D^k_j$, there is a bounded face $F_C$ of $D^k_j$ such that all edges of $C$ are incident to the part of $F_C$ that lies below $\ell_C$ in the vertical distance larger than $\alpha$.

\begin{figure}[ht]
\centering
\includegraphics{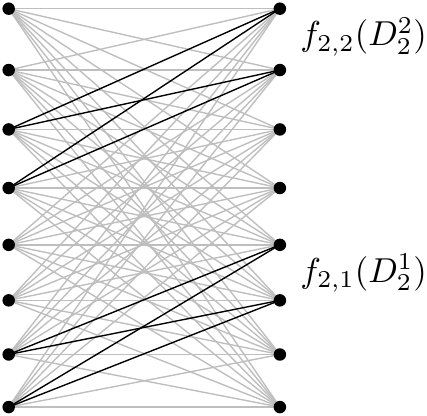}
\caption{An example of the drawings $f_{2,1}(D^1_2)$ and $f_{2,2}(D^2_2)$ (denoted by black segments) in the $\varepsilon$-dilated drawing $D$ of $K_{8,8}$.}
\label{fig-affine}
\end{figure}

By induction on $j$, $1 \leq j \leq \lceil\log{n}\rceil$, we show that every cap from $\mathcal{C}_j$ is good in $D_j$ in the $j$th step of the construction.
We already observed that this is true for $j=1$.
Suppose for a contradiction that there is a cap $C \in \mathcal{C}_j$ that is not good in $D_j$ for $j>1$.
Using the inductive hypothesis and the choice of $\varepsilon_j$, we see that $C$ is not in $\mathcal{C}'_{j-1}$.
Therefore there is a drawing $D_j^k$ for $1\leq k \leq 2^{j-1}$ such that $C$ is a level-cap of $D_j^k$.
Since $C$ is good in $D_j^k$, all edges forming $C$ are incident to a single bounded face $F_C$ of $D^k_j$. 
The drawings $D^1_j,\ldots,D_j^{2^{j-1}}$ are contained in pairwise disjoint vertical strips, thus $C$ is good in $D_j^1 \cup \cdots \cup D_j^{2^{j-1}}$.

It follows from our choice of $\delta$ and $\varepsilon$ and from part~\ref{item-lemCapNew} of Lemma~\ref{lem-cap} that all edges of~$C$ are incident to $F_C$ in a $1/8$-neighborhood of $\ell_C$.
Moreover, all edges of~$C$ are incident to the part of $F_C$ that  lies below $\ell_C$ in the vertical distance larger than $\alpha$.
Therefore all edges of~$C$ are incident to the part of~$F_C$ that is in the horizontal strip $H$ between the horizontal lines $\ell_C - 1/8$ and $\ell_C-\alpha$.
Because $C$ is not good in~$D_j$, some edge $e$ of $D'_{j-1}$ intersects $H$ in the vertical strip $V_j^k$.
By the choice of $\varepsilon_j$, the portion of $e$ inside $V_j^k$ is contained in the horizontal strip $\mathbb{R}\times (y(p)-\alpha,y(p)+\alpha)$ for an endpoint $p$ of $e$.
However, the horizontal line $\ell_C$ is by definition either at distance at least $1/2$ from every point of $P$ or it contains some edge $p_iq_i$ with $1 \leq i \leq n$.
In any case, since $\alpha \le \delta < 1/8$, the strip $H$ is disjoint with all the horizontal strips $\mathbb{R}\times (y(p)-\alpha,y(p)+\alpha)$ for all endpoints $p$ in $D'_{j-1}$ and we obtain a contradiction.

Let $\widehat{D}_j^k$ be the part of the drawing $D_{\lceil \log{n}\rceil}$ transformed from $D_j^k$ in the steps $j+1, \ldots, \lceil\log{n}\rceil$.
For every $\widehat{D}_j^k$, we place the obstacles to its good level-caps as in the first part of the proof of Theorem~\ref{thm-BipartiteUpperBound}.
Using the fact that bounded faces of every geometric drawing of $K_n$ are convex, it follows from the construction of $D_{\lceil\log{n}\rceil}$ that we obtain an obstacle representation of $G$.

The total number of obstacles that we use can be estimated recursively.
Every drawing $\widehat{D}^k_j$ is a bipartite drawing of $K_{a,b}$ for some integers $a,b$ satisfying $|a-b| \leq 1$ and $a,b \leq \lceil n/2^j \rceil$.
For such $\widehat{D}^k_j$ we use $a+b-1$ obstacles.
It follows from the construction that the maximum number of obstacles that we use for an $n$-vertex graph can be estimated from above by a function $h(n)$ that is given by a recursive formula $h(n)=h(\lceil n/2 \rceil)+h(\lfloor n/2 \rfloor)+n-1$ with $h(1)=0$.

Now it suffices to show $h(n) \leq n\lceil \log{n}\rceil -n+1$.
We proceed by induction on $n$.
The inequality is trivially satisfied for $n=1$.
For $n>1$, we have
\begin{align*}
h(n)&=h\left(\left\lceil \frac{n}{2} \right\rceil \right)+h\left(\left\lfloor \frac{n}{2} \right\rfloor \right)+n-1 \\ 
& \leq \left\lceil \frac{n}{2} \right\rceil \left\lceil\log{\left(\left\lceil \frac{n}{2} \right\rceil\right)}\right\rceil - \left\lceil\frac{n}{2} \right\rceil + 1 + \left\lfloor \frac{n}{2} \right\rfloor \left\lceil\log{\left(\left\lfloor \frac{n}{2} \right\rfloor\right)}\right\rceil - \left\lfloor\frac{n}{2} \right\rfloor + 1 + n - 1.
\end{align*}
We have $\lceil\log \lceil\alpha\rceil \rceil = \lceil\log \alpha \rceil$ for every real number $\alpha \ge 1$ and thus $\lceil\log(\lceil n/2 \rceil)\rceil = \lceil\log(n/2)\rceil = \lceil\log n\rceil - 1$, since $n \ge 2$.
Therefore
\begin{align*}
h(n)& \leq \left\lceil \frac{n}{2} \right\rceil \left(\left\lceil\log{n}\right\rceil-1\right) + \left\lfloor \frac{n}{2} \right\rfloor  \left(\left\lceil\log{n}\right\rceil-1\right) + 1 = n\lceil \log{n}\rceil -n+1.
\end{align*}
This finishes the proof of Theorem~\ref{thm-GeneralUpperBound}.\qed
\end{proof}

An alternative construction, in which the underlying point set is a variant of the well-known \emph{Horton sets}~\cite{Valtr}, can be found in the conference version of this paper~\cite{Balko+15}.

Let $P$ be a finite set of points of the plane, let $G$ be a graph with vertex set $V$, $|V|=|P|$, and let $f \colon V \to P$ be a one-to-one correspondence.
An obstacle representation of $G$ is \emph{induced by $f$} if every vertex $v$ of $G$ is represented by the point $f(v) \in P$.
The proof of Theorem~\ref{thm-GeneralUpperBound} gives the following stronger claim.

\begin{corollary}
\label{cor-GeneralUpperBound}
For every positive integer $n$, there is a set $P$ of $n$ points in the plane such that for every graph $G=(V,E)$ on $n$ vertices and for every one-to-one correspondence $f \colon V \to P$ there is an obstacle representation of $G$ induced by $f$ with at most $n\lceil \log{n} \rceil-n+1$ convex obstacles.\qed
\end{corollary}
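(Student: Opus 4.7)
The plan is to inspect the proof of Theorem~\ref{thm-GeneralUpperBound} and observe that it implicitly already constructs a single point set $P$ that works uniformly for all graphs $G$ on $n$ vertices. First I would note that the initial $\varepsilon$-dilated bipartite drawing $D$ of $K_{n,n}$ depends only on $n$ and on the parameters $w = d_1 = h_1 = 1$, $\varepsilon$, and $\delta$, which are themselves fixed functions of $n$ via Lemma~\ref{lem-cap}. Passing from $D$ to $D_1$ by discarding the top half of one color class and the bottom half of the other is a combinatorial operation involving no graph. The iterative horizontal shifts by $\varepsilon_j$ are controlled by purely geometric conditions on the drawings $D_{j-1}'$ (preservation of good caps in $\mathcal{C}_{j-1}$ and containment of the portions of edges of $D_{j-1}'$ inside each strip $V_{j'}^{k'}$ in a narrow horizontal band of height $2\alpha$ around a vertex $y$-coordinate), and no such condition refers to any particular graph.

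Second, I would define $P$ to be the set of $n$ points obtained as the vertex set of the final drawing $D_{\lceil\log n\rceil}$ produced by this construction. Given an arbitrary graph $G = (V,E)$ on $n$ vertices and an arbitrary bijection $f\colon V\to P$, the correspondence $f$ now plays exactly the role of the ``arbitrary mapping of the vertices of $G$ to the vertices of $D_1$'' used in the proof of Theorem~\ref{thm-GeneralUpperBound}. The subsequent placement of convex obstacles along the good caps in $\mathcal{C}_{\lceil\log n\rceil}$, following the scheme borrowed from the proof of Theorem~\ref{thm-BipartiteUpperBound}, blocks exactly those pairs of points that are non-edges of $G$ under $f$, and the counting argument yielding $h(n) \le n\lceil\log n\rceil - n + 1$ applies verbatim.

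The only minor point to verify is that the obstacle placement is genuinely independent of which subset of edges of $K_n$ constitutes the non-edges of $G$. This is already inherent in the argument: the good caps in $\mathcal{C}_{\lceil\log n\rceil}$ partition the edge set of $K_n$ under the drawing $D_{\lceil\log n\rceil}$, and for each good cap the convex obstacle is constructed as the convex hull of suitable midpoints on those edges of the cap that are non-edges of $G$, exactly as in the proof of Theorem~\ref{thm-BipartiteUpperBound}. In particular, an entirely empty cap (all its edges belonging to $G$) contributes no obstacle, while any non-trivial subset of non-edges along a cap is blocked by a single convex obstacle placed inside the corresponding bounded face. Hence Corollary~\ref{cor-GeneralUpperBound} follows directly by extracting the point set $P$ from the proof of Theorem~\ref{thm-GeneralUpperBound} and observing that the remaining construction is uniform in $G$ and $f$; there is no essential new obstacle.
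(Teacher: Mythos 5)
Your proposal is correct and matches the paper's intended argument: the paper states the corollary as an immediate byproduct of the proof of Theorem~\ref{thm-GeneralUpperBound}, relying precisely on the observation that the drawing $D_{\lceil\log n\rceil}$ is built independently of $G$, that the vertex-to-point assignment in that proof is arbitrary, and that only the subsequent obstacle placement depends on $G$ and $f$. Your spelled-out verification that the choices of $\varepsilon$, $\delta$, the $\varepsilon_j$, and the halving scheme involve no reference to $G$ is exactly the content the paper leaves implicit.
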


We also note that the vertices of the drawing $D_{\lceil\log{n}\rceil}$ can be perturbed so that the vertices of the resulting drawing are in general position.
When the perturbation is small enough, the obstacles can be modified to block exactly the same set of edges that they blocked before the perturbation.

\section{Proof of Theorem~\ref{thm-chromaticUpperBound}}
\label{sec:proofThmChromatic}

To prove Thoerem~\ref{thm-chromaticUpperBound}, we start with the same $\varepsilon$-dilated drawing $D$ of $K_{n,n}$ and use $P$ and $Q$ to denote the left and the right color class of $D$, respectively.
Again, by part~\ref{item-lemCap2} of Lemma~\ref{lem-cap}, edges of each level of $D$ form a good cap in $D$.

For a finite set $S$ of points on a vertical line $\ell$, an \emph{interval in $S$} is a subset $I$ of $S$ such that there is no point of $S\setminus I$ lying between two points of $I$ on $\ell$.
If $S$ is partitioned into intervals $I_1,\ldots,I_m$, then the \emph{bottom part of $S$ with respect to $I_1,\ldots,I_m$} is the set of points of $S$ that are contained in the intervals $I_1,\ldots,I_{\lceil m/2\rceil}$.
The set of remaining points of $S$ is called the \emph{top part of $S$ with respect to $I_1,\ldots,I_m$}.

Let $c$ be a subcoloring of $G$ with $\chi_s(G)$ colors.
We map the vertices of $G$ to the points of $P$ such that the color classes of $c$ form intervals in~$P$ ordered by increasing $y$-coordinates and such that in each color class the vertex sets of the disjoint cliques also form  intervals in~$P$ ordered by increasing $y$-coordinates.
We map the vertices of $G$ to $Q$ in the same order by increasing $y$-coordinate.
Let $P^1_1$ be the bottom part of $P$ and let $P^2_1$ be the top part of $Q$ with respect to the partitioning of $P$ and $Q$ into the color classes of $c$.
We let $D_1$ be the drawing of the complete bipartite graph with the bottom part of $P$ as the left color class of $D_1$ and with the top part of $Q$ as the right color class of $D_1$.

The construction of the obstacle representation of $G$ then proceeds analogously as in the proof of Theorem~\ref{thm-GeneralUpperBound} with only one difference.
Instead of partitioning the set $P^k_{j-1}$ into the bottom and the top half, we partition $P^k_{j-1}$ into the bottom and the top part with respect to the partitioning of $P^k_{j-1}$ into the color  classes of $c$.
If all points of $P^k_{j-1}$ are contained in the same color class of $c$, then we do not partition $P^k_{j-1}$ at all.

For every drawing $D^k_j$, we place the obstacles to block non-edges of $G$ in the same way as in the proof of Theorem~\ref{thm-GeneralUpperBound}.
We place obstacles only to block non-edges  of $G$ between vertices from different color classes of $c$.

We now estimate the number of obstacles used so far.
The number of steps is at most $\lceil \log{\chi_s(G)}\rceil$, as the number of intervals in the partitioning of $P^k_j$ is at most $\lceil \chi_s(G) / 2^j \rceil$.
We place at most $n-1$ obstacles in every step, as the number of obstacles used in a drawing $D^k_j$ is less than the number of vertices in $D^k_j$  and the total number of vertices in the drawings $D^k_j$ is at most $n$ for a fixed $j$.
Thus the total number of obstacles is at most $(n-1)\lceil \log{\chi_s(G)}\rceil$.

It remains to block non-edges within each color class of the subcoloring~$c$.
Let $P_j^k$ be a set that forms a color class of~$C$.
We define a single-point obstacle $O_{u,v}$ for every pair $\{u,v\}$ of consecutive vertices of $P^k_j$ that are not contained in a common clique of the color class $P^k_j$ such that $O_{u,v}$ blocks no edge of~$G$.
Since the cliques in the color class $P^k_j$ form intervals  and all vertices from each color class lie on a common line, the obstacles $O_{u,v}$ exist.
This gives us less than $n$ additional obstacles, as there is at most $n-1$ such pairs $\{u,v\}$ of consecutive vertices of $G$.

Note that the construction from the proof of Theorem~\ref{thm-GeneralUpperBound} is a special case of this construction, in which the color classes of $c$ consist of a single point.
The vertices of the final drawing in the obstacle representation of $G$ are not in general position.
However, they can be perturbed to be in general position and the obstacles can be modified to keep blocking the same set of edges, in particular, the one-point obstacles can be replaced by short horizontal segments.

\begin{remark}
There are graphs $G$ for which the bound $\obs_c(G) \leq (n-1)(\lceil\log{\chi_s(G)}\rceil + 1)$ from Theorem~\ref{thm-chromaticUpperBound} is significantly better than $\obs_c(G) \leq (n-1)(\lceil\log{\chi_0}\rceil + 1)$ where $\chi_0 \colonequals \min\{\chi(G),\chi(\overline{G})\}$.
For example, if $G$ is a disjoint union of $t$ cliques, each of size $t$ for some positive integer $t$, then the first bound gives $\obs_c(G) \leq n-1$ for $n\colonequals t^2$, as $\chi_s(G) = 1$.
On the other hand, we have $\chi(G) = t = \chi(\overline{G})$ and thus the latter bound gives only $\obs_c(G) \leq (n-1)\lceil \log{t}\rceil +1 = \Theta(n\log{n})$.
\end{remark}

\begin{remark}
Note that we can choose vertices and obstacles in the obstacle representation $\mathcal{O}$ of a graph $G=(V,E)$ from Theorem~\ref{thm-chromaticUpperBound} in such a way that every non-edge of $G$ intersects exactly one obstacle and it intersects the obstacle in exactly one point.
In particular, if we add a non-edge $e$ of a graph $G$ to $G$, then by removing the point $e \cap O$ from the obstacle $O$ that intersects $e$, we construct an obstacle representation of $G+e \colonequals (V,E \cup \{e\})$ with at most one additional obstacle when compared to $\mathcal{O}$.

Motivated by this observation, we pose the following problem.
If $G$ is a graph and $e$ is a non-edge of $G$, how much larger can $\obs(G+e)$ be when compared to $\obs(G)$?
The same question can be also asked for the convex obstacle number.
Note that $\obs(G+e) \geq \obs(G)-1$ and $\obs_c(G+e) \geq \obs_c(G)-1$ for every graph $G$ and every non-edge $e$ of $G$.
\end{remark}

\section{Proof of Theorem~\ref{thm-count-lb}}
\label{sec:proofThmCount-lb}

Let $h$ and $n$ be given positive integers with $h < n$.
We show that the number $g(h,n)$ of  labeled $n$-vertex graphs of obstacle number at most $h$ is at least $2^{\Omega(hn)}$.

For a point set $P \subseteq \mathbb{R}^2$ in general position, let $e(h,P)$ be the maximum integer for which there is a set $\mathcal{F}$ of at most $h$ bounded faces of $K_P$ and a set of $e(h,P)$ edges of $K_P$ that are incident to at least one face from $\mathcal{F}$. 
Let $e(h,n)$ be the maximum of $e(h,P)$ over all sets $P$ of $n$ points in the plane in general position.

\begin{claim}
We have $g(h,n) \geq 2^{e(h,n)}$.
\end{claim}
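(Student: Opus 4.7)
The plan is to exhibit, for each subset $S$ of a suitably chosen large set of edges, a distinct labeled graph on $n$ vertices admitting an obstacle representation with at most $h$ obstacles. Fix a set $P$ of $n$ points in general position attaining the maximum in the definition of $e(h,n)$. By that definition there is a collection $\mathcal{F} = \{F_1, \dots, F_t\}$ of $t \leq h$ bounded faces of $K_P$ such that the set $E_0$ of edges of $K_P$ incident to at least one $F_i \in \mathcal{F}$ has cardinality $e(h,n)$. Fix once and for all an assignment $\phi \colon E_0 \to \{1, \dots, t\}$ such that each edge $e \in E_0$ is incident to $F_{\phi(e)}$; this is possible since each $e \in E_0$ is incident to some face of $\mathcal{F}$.

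For each subset $S \subseteq E_0$, I will construct an obstacle representation on the point set $P$ of the graph $G_S \colonequals (V(K_P), E(K_P) \setminus S)$ that uses at most $h$ polygonal obstacles, one per face $F_i$. Let $S_i \colonequals S \cap \phi^{-1}(i)$. For each $i$, I construct a polygonal obstacle $O_i^S$ contained in the closed face $\overline{F_i}$ whose intersection with the boundary $\partial F_i$ meets precisely the edges of $S_i$ (and meets each of them in a non-empty sub-arc of positive length lying in the relative interior of the corresponding boundary sub-segment). If $S_i = \emptyset$, the obstacle $O_i^S$ can be omitted. Since $O_i^S \subseteq \overline{F_i}$ and $F_i$ is a face of the drawing, $O_i^S$ meets no edge of $K_P$ other than those on $\partial F_i$; by construction it meets exactly the edges in $S_i$. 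Consequently the union $\bigcup_i O_i^S$ blocks exactly the edges in $\bigsqcup_i S_i = S$, giving an obstacle representation of $G_S$ with at most $t \leq h$ obstacles.

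The only geometric point requiring attention is the construction of each $O_i^S$. Pick an interior point $z_i$ of $F_i$. The boundary $\partial F_i$ is a closed polygonal curve whose edges are sub-segments of $K_P$-edges; for every $e \in S_i$ choose a small open sub-arc $\sigma_e$ lying in the relative interior of the sub-segment $e \cap \partial F_i$. Connect $z_i$ to each $\sigma_e$ by a narrow polygonal corridor that stays in $\overline{F_i}$; taking the union of these corridors and $\sigma_e$ yields a single polygonal region $O_i^S \subseteq \overline{F_i}$ which meets $\partial F_i$ exactly in $\bigcup_{e\in S_i} \sigma_e$. Shrinking the corridors sufficiently guarantees that distinct corridors do not interfere, that no vertex of $K_P$ is swallowed by $O_i^S$, and that each $\sigma_e$ avoids the endpoints of $e \cap \partial F_i$; routing the corridors through $F_i$ (rather than using straight segments) handles the case in which $F_i$ is non-convex.

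Finally, distinct subsets $S \subseteq E_0$ produce distinct edge sets $E(K_P) \setminus S$ and hence distinct labeled graphs $G_S$, each with obstacle number at most $h$. Therefore $g(h,n) \geq 2^{|E_0|} = 2^{e(h,n)}$, as claimed. The only substantive issue is the polygon-construction step in the previous paragraph, but it is purely local to each face and is handled by the standard "thin corridors" trick; the rest of the argument is bookkeeping.
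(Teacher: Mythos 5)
Your proof is correct and follows essentially the same strategy as the paper's: fix $P$ and $\mathcal{F}$ achieving $e(h,n)$, and for each of the $2^{e(h,n)}$ subsets of the incident edges, remove that subset from $K_P$ and block exactly those edges using one obstacle per face. The only real difference is in the obstacle construction: the paper exploits the fact that every bounded face of a complete geometric graph $K_P$ (with $P$ in general position) is convex, and simply takes the convex hull of the midpoints of the non-edges on $\partial F_i$, which is automatically contained in $F_i$ and touches the boundary only at those midpoints; you instead build a star-shaped ``thin corridors'' polygon, which avoids relying on convexity but yields non-convex obstacles and requires the extra (harmless) bookkeeping of the assignment $\phi$. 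Both are valid; the paper's construction is shorter and has the side benefit of using convex obstacles, while yours is more robust to the geometry of the faces.
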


To prove the claim, let $P$ be a set of $n$ points in the plane in general position for which $e(h,P)=e(h,n)$.
Let $\mathcal{F}$ be the set of at most $h$ bounded faces of $K_P$ such that $e(h,n)$ edges of $K_P$ are incident to at least one face from $\mathcal{F}$.
For a face $F \in \mathcal{F}$, let $E_F$ denote the set of edges of $K_P$ that are incident to $F$.
We use $G$ to denote the graph with vertex set $P$ and with two vertices connected by an edge if and only if the corresponding edge of $K_P$ is not in $\cup_{F \in \mathcal{F}}E_F$.

We show that every subgraph $G'$ of $K_P$ containing $G$ satisfies $\obs(G') \leq h$.
The claim then follows, as the number of such subgraphs $G'$ is $2^{e(h,n)}$.

Let $G'$ be a subgraph of $K_P$ such that $G\subseteq G'$.
For every face $F \in \mathcal{F}$, we define a convex obstacle $O_F$ as the convex hull of midpoints of line segments $e \cap F$ for every $e \in E_F$ that represents a non-edge of $G'$.
Note that, since all bounded faces of $K_P$ are convex, the obstacle $O_F$ is contained in $F$ and thus $O_F$ blocks only non-edges of $G'$.
Since every non-edge of $G'$ is contained in $E_F$ for some $F \in \mathcal{F}$, we obtain an obstacle representation of $G'$ with at most $h$ convex obstacles.
This finishes the proof of the claim.\qed

Since $h<n$, the following and the previous claim give Theorem~\ref{thm-count-lb}.

\begin{claim}
For $n \geq 3$, we have $e(h,n) \geq \frac{2hn-h^2-1}{4}$.
\end{claim}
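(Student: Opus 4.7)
My plan is to realise the lower bound by a (perturbed) $\varepsilon$-dilated bipartite construction. Start with an $\varepsilon$-dilated bipartite drawing $D$ of $K_{a,b}$ with $a = \lceil n/2 \rceil$ and $b = \lfloor n/2 \rfloor$, normalised so that the width, left step, and right step all equal $1$, where $\varepsilon > 0$ is small enough for part~\ref{item-lemCap2} of Lemma~\ref{lem-cap} to apply. That lemma guarantees that every level of $D$ forms a good cap, so each level $L_k$ of size at least two has an associated bounded face $F_k$ of $D$ incident to all its edges; the (at most two) extreme single-edge levels each bound a unique adjacent bounded face of $D$ which we also call $F_k$. Every such $F_k$ has its interior in the open central strip $\{0 < x < 1\}$, since all bounded faces of $D$ do. Let $L_{k_1}, \dots, L_{k_h}$ be the $h$ levels of largest size and set $\mathcal{F} = \{F_{k_1}, \dots, F_{k_h}\}$.

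Next I would pass to the complete geometric graph on $n$ points in general position while preserving the incidences of $\mathcal{F}$. For a small parameter $\eta > 0$ fixed at the end, shift each $p_i$ leftward to $p_i' = (-\alpha_i, y(p_i))$ with generic $\alpha_i \in (0, \eta)$, and each $q_j$ rightward to $q_j' = (1 + \beta_j, y(q_j))$ with generic $\beta_j \in (0, \eta)$. Genericity kills all remaining collinear triples, so $P' \cup Q'$ is in general position. The intra-class segments $\overline{p_i' p_j'}$ and $\overline{q_i' q_j'}$ lie in $\{x \leq 0\}$ and $\{x \geq 1\}$ respectively, disjoint from the central strip. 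Choosing $\eta$ smaller than the minimum separation between the vertices of the arrangement of $D$ and than the inradius of each $F_k$, standard combinatorial stability of line-segment arrangements under small endpoint perturbation ensures that each $F_k$ deforms continuously to a bounded face of $K_{P' \cup Q'}$ incident to exactly the same bipartite edges as before.

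The counting step is then a direct calculation. The multiset of level sizes of $K_{a,b}$ is $\{\min\{k, b, a+b-k\} : 1 \leq k \leq a+b-1\}$, which sorted in decreasing order reads $b, b-1, b-1, b-2, b-2, \dots, 1, 1$ for $n$ even and $b, b, b-1, b-1, \dots, 1, 1$ for $n$ odd. A case split in the parities of $n$ and $h$ yields that the sum $S_h$ of the top $h$ entries equals $(2hn - h^2 + 1)/4$ when $n$ is even and $h$ is odd, $(2hn - h^2 - 1)/4$ when both are odd, and $(2hn - h^2)/4$ otherwise. In every case $S_h \geq (2hn - h^2 - 1)/4$, and since levels partition the bipartite edge set these $S_h$ edges are distinct, so $e(h, n) \geq S_h \geq (2hn - h^2 - 1)/4$.

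The main obstacle is coordinating the choices so that the faces $F_k$ and their incidences survive simultaneously the addition of intra-class edges and the perturbation to general position. The delicate point is that $\eta$ must be chosen \emph{after} $\varepsilon$ and the resulting $F_k$, and the perturbation directions must be uniformly outward (leftward for $P$, rightward for $Q$), so that the new intra-class segments stay out of the strip where the selected faces live. Once these choices are made, the construction produces $n$ points in general position together with $h$ bounded faces of their complete geometric graph incident to at least $(2hn - h^2 - 1)/4$ edges, as required.
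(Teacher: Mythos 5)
Your proof follows the paper's own approach closely: the same $\varepsilon$-dilated bipartite drawing of $K_{\lceil n/2\rceil,\lfloor n/2\rfloor}$, the same use of Lemma~\ref{lem-cap}\ref{item-lemCap2} to obtain good level-caps, the same perturbation to a point set in general position to pass to the full complete geometric graph, and the same set of $h$ levels (your $h$ largest levels coincide with the paper's $h$ middle caps $C'_{\lfloor n/2\rfloor-\lceil h/2\rceil+i}$), giving the same sum $S_h$ and the same bound. Your explicit outward perturbation, pushing $P$ into $\{x<0\}$ and $Q$ into $\{x>1\}$ so that the intra-class segments avoid the central strip, is a slightly cleaner way to guarantee the selected faces survive than the paper's terse ``if the perturbation is sufficiently small.'' The one small inaccuracy is the claim that the extreme single-edge levels ``bound a unique adjacent bounded face of $D$'': for $n=3$ the drawing $D$ of $K_{2,1}$ has no crossings and hence no bounded face at all. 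You should instead pick, for each single-edge level, any bounded face of $K_{P'\cup Q'}$ incident to its edge; such a face exists for every edge once $n\ge 3$ and the points are in general position, and this is exactly what the paper does implicitly by defining the faces $F_i$ inside $K_P$ rather than inside $D$. With that tiny fix, your parity computation of $S_h$ correctly yields $e(h,n)\ge (2hn-h^2-1)/4$.
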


Let $\varepsilon>0$ be chosen as in part~\ref{item-lemCap2} of Lemma~\ref{lem-cap} for $K_{\lceil n/2 \rceil, \lfloor n/2 \rfloor}$ and $w=d_1=h_1=1$.
Let $D$ be an $\varepsilon$-dilated drawing of $K_{\lceil n/2 \rceil, \lfloor n/2 \rfloor}$ with $w=d_1=h_1=1$, $p_1=(0,0)$, and $q_1=(1,0)$.
By part~\ref{item-lemCap2} of Lemma~\ref{lem-cap}, the edges of the $k$th level of $D$ form a good cap $C_k$ in $D$ for every $k=1,\ldots,n-1$.

We perturb the vertices of $D$ such that the vertex set of the resulting geometric drawing $D'$ of $K_{\lceil n/2 \rceil, \lfloor n/2 \rfloor}$ is in general position.
We let $K_P$ be the geometric drawing of $K_n$ obtained from $D'$ by adding the missing edges.
Note that if the perturbation is sufficiently small, then every good cap $C_k$ in $D$ corresponds to a good cap $C'_k$ in $K_P$.

Let $\mathcal{F}\colonequals\{F_1,\ldots,F_h\}$ be the set of (not necessarily distinct) bounded faces of $K_P$ such that, for $i=1,\ldots,h$, all edges of the cap $C'_{\lfloor n/2\rfloor-\lceil h/2\rceil+i}$ are incident to $F_i$.
That is, $F_1,\ldots,F_h$ are faces incident to edges of $h$ middle caps $C'_k$; see Figure~\ref{fig-lowerBound}.
Since caps $C'_{\lfloor n/2\rfloor-\lceil h/2\rceil+i}$ are good in $K_P$ and $n \geq 3$, the faces $F_i$ exist.

\begin{figure}[ht]
\centering
\includegraphics{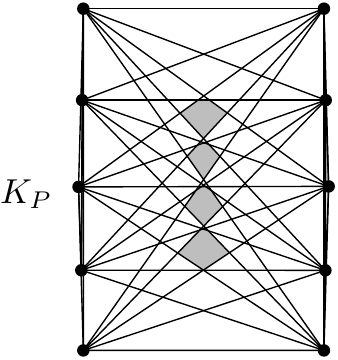}
\caption{A situation in the proof of Theorem~\ref{thm-count-lb} for $n=10$ and $h=4$. The faces $F_1,\ldots,F_h$ of $K_P$ are denoted gray.}
\label{fig-lowerBound}
\end{figure}

Every cap $C'_k$ is formed by $\min\{k, n-k\}$ edges for every $k=1,\ldots,n-1$.
Therefore, for every $i=1,\ldots,h$, the face $F_i$ is incident to at least $\min\{\lfloor n/2\rfloor-\lceil h/2\rceil+i,\lceil n/2\rceil+\lceil h/2\rceil-i\}$ edges of $K_P$.
Summing over $i=1,\ldots,h$, we obtain the following expression for the number of edges of $K_P$ incident to at least one face of $\mathcal{F}$:
\begin{align*}
e(h,n) &\ge  \sum_{i=1}^{\lceil h/2 \rceil}\left(\left\lfloor \frac{n}{2} \right\rfloor - \left\lceil \frac{h}{2} \right\rceil + i \right) + \sum_{i=\lceil h/2 \rceil + 1}^{h}\left(\left\lceil \frac{n}{2} \right\rceil+\left\lceil \frac{h}{2} \right\rceil-i\right) = \\
& = \sum_{j=1}^{\lceil h/2 \rceil}\left(\left\lfloor \frac{n}{2} \right\rfloor - j + 1 \right) + \sum_{j=1}^{\lfloor h/2 \rfloor}\left(\left\lceil \frac{n}{2} \right\rceil - j\right).
\end{align*}
If $h$ is even, we have 
\begin{align*}
e(h,n) &\ge \sum_{j=1}^{h/2}\left(\left\lfloor \frac{n}{2} \right\rfloor + \left\lceil \frac{n}{2} \right\rceil - 2j + 1 \right) 
= \frac{nh}{2} + \frac{h}{2} - 2 \sum_{j=1}^{h/2} j \\
&= \frac14 \left(2nh + 2h - h \cdot (h+2) \right) > \frac14(2nh-h^2-1).
\end{align*}
If $h$ is odd, we have 
\begin{align*}
e(h,n) &\ge \sum_{j=1}^{(h-1)/2}\left(\left\lfloor \frac{n}{2} \right\rfloor + \left\lceil \frac{n}{2} \right\rceil - 2j + 1 \right) + \left\lfloor \frac{n}{2} \right\rfloor - \frac{h-1}{2} \\
&= \frac{n(h-1)}{2} - 2 \left(\sum_{j=1}^{(h-1)/2} j\right) + \frac{h-1}{2} + \left\lfloor \frac{n}{2} \right\rfloor - \frac{h-1}{2} \\
&\ge \frac14 \left(2nh - 2n - (h-1)(h+1) + 2(n-1) \right) \\
&= \frac14(2nh-h^2-1). 
\end{align*}
This implies $e(h,n) \geq (2hn-h^2-1)/4$ and proves the claim.\qed

\section{Proof of Theorem~\ref{thm-arrangements}}
\label{sec:thm-arrangements}

For a sufficiently large constant $C$ and every sufficiently large integer $n$, we find a bipartite drawing $D$ of $K_{n,n}$ such that for every integer $M$ satisfying $Cn \leq M \leq n^4/C$ there is a set of at most $M$ faces of $D$ with complexity at least $\Omega(n^{4/3}M^{2/3})$.
When $M=Cn$, we obtain a set of at most $Cn$ faces with complexity at least $\Omega(n^2)$.
For $M \leq Cn$, it suffices to take only the faces with the highest complexity from this set to obtain the lower bound $\Omega(nM)$.
Theorem~\ref{thm-arrangements} then follows, as $D$ can be treated as an arrangement of $n^2$ line segments with $2n$ endpoints.

Let $D'$ be the regular bipartite drawing of $K_{n,n}$ with width, left step, and right step equal to 1, $p_1=(0,0)$, and $q_1=(1,0)$.
For all coprime integers $i$ and $k$ satisfying $1\leq i < k \leq n/2$, every intersection point of a uniformly crossing $l$-tuple of edges $(p_{i_1}q_{j_1},\ldots,p_{i_l}q_{j_l})$ of $D'$ with $i_2-i_1=i$ and $j_2-j_1=i-k$ is called a \emph{uniform $(i,k)$-crossing}; see part~(a) of Figure~\ref{fig-arrang}.
A point that is a uniform $(i,k)$-crossing for some integers $i$ and $k$ is called a \emph{uniform crossing}.

Note that all uniform $(i,k)$-crossings lie on the vertical line $\{\frac{i}{k}\}\times \mathbb{R}$ and that no uniform $(i,k)$-crossing is a uniform $(i',k')$-crossing for any pair $(i',k')\neq(i,k)$, as $i$ and $k$ are coprime.
Since the $y$-coordinate of every uniform $(i,k)$-crossing equals $j/k$ for some $0\leq j \leq kn-k$, the number of uniform $(i,k)$-crossings is at most $kn$.
We now show that there are at least $n^2-2in > n^2-2kn$ edges of $D'$ that contain a uniform $(i,k)$-crossing.
This follows from the condition $k \leq n/2$, as for every edge $p_{i'}q_{j'}$ of $D'$ with $i<i'\leq n-i$ and $1\leq j'\leq n$ either $p_{i'-i}q_{j'+k-i}$ or $p_{i'+i}q_{j'-k+i}$ is an edge of $D'$ and forms a uniform $(i,k)$-crossing with $p_{i'}q_{j'}$.

We choose $\varepsilon>0$ as in part~\ref{item-lemCap2} of Lemma~\ref{lem-cap} for $K_{n,n}$ and $w=d_1=h_1=1$.
Let $D$ be an $\varepsilon$-dilated drawing of $K_{n,n}$ with width, left step, and right step equal to 1, with the left lowest point $(0,0)$, and with the right lowest point $(1,0)$.
By part~\ref{item-lemCap2} of Lemma~\ref{lem-cap}, every uniformly crossing $l$-tuple of edges of $D$ forms a good cap in~$D$.
In particular, every uniform crossing~$c$ in~$D'$ is the meeting point of edges of~$D$ that form a good cap $C_c$.
Let $F_c$ be the bounded face of~$D$ such that all edges of $C_c$ are incident to $F_c$ and all vertices of $C_c$ are vertices of $F_c$; see part~(b) of Figure~\ref{fig-arrang}.

\begin{figure}[ht]
\centering
\includegraphics{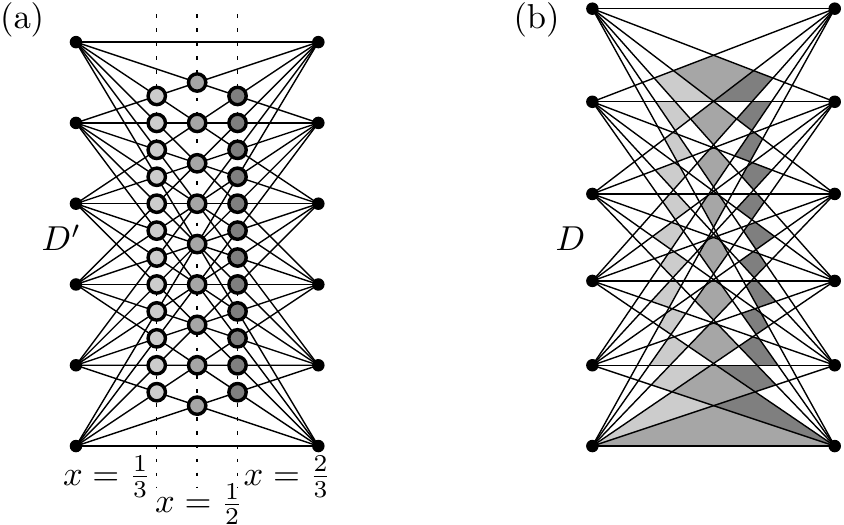}
\caption{(a) Uniform $(1,3)$-crossings (denoted light gray), $(1,2)$-crossings (gray), and $(2,3)$-crossings (dark gray) in the regular drawing $D'$ of $K_{6,6}$. (b) Faces corresponding to the $(1,3)$-crossings (light gray), to the uniform $(1,2)$-crossings (gray), and to the $(2,3)$-crossings (dark gray) in the $\varepsilon$-dilated drawing $D$.}
\label{fig-arrang}
\end{figure}

Let $c$ be the meeting point of a maximal uniformly crossing $l$-tuple $L$ of edges from $D'$ for some $l \geq 2$ .
Similarly, let $c'$ be the meeting point of a maximal uniformly crossing $l'$-tuple $L'$ of edges from $D'$ for some $l' \geq 2$.
Let $F_c$ and $F_{c'}$ be the faces of $D$ that are incident to all edges of $L$ and $L'$, respectively, in $D$.
We show that the faces $F_c$ and $F_{c'}$ of~$D$ are distinct.

Let $F'_c$ and $F'_{c'}$ be the faces that correspond to $F_c$ and $F_{c'}$ in $D'$.
It suffices to show that $F'_c$ and $F'_{c'}$ are distinct, so suppose for a contradiction that $F'_c = F = F'_{c'}$.
We can assume without loss of generality that $c$ and $c'$ lie on a common edge $e=p_rq_s$ of $D'$, since all vertices $d$ of $F$ with $F=F'_d$ are meeting points of a maximal uniformly crossing tuple of edges from $D'$ and they form an interval on $F$.
Therefore we can choose $c$ and $c'$ to be two such vertices of $F$ that share an edge of $F$.
We also assume that $c$ is to the left of $c'$ on $e$.
Then $e$ has the smallest slope among edges that contain $c$.
We let $p_{r-i}q_{s+k-i}$ be the edge that contains $c$ and that has the second smallest slope among edges that contain $c$.
Similarly, $e$ has the largest slope among edges that contain $c'$ and we let $p_{r+i'}q_{s-k'+i'}$ be the edge that contains $c'$ and that has the second largest slope among edges that contain $c'$; see part~(a) of Figure~\ref{fig-arrangFaces}.
Note that $1 \leq i < k < n$ and $1 \leq i' <  k' < n$.
Without loss of generality, we assume that $k \leq k'$.

Let $x$ be the point on $\ell_P$ above $p_r$ with $|p_rx|=|p_rp_{r-i}|$ and let $y$ be the point on $\ell_Q$ below $q_s$ with $|q_sy|=|q_sq_{s+k-i}|$.
Observe that $c$ is the common point of $e$ and $xy$.
The point $x$ is below $p_{r+i'}$, since otherwise we have $|p_rx|=i \geq i' = |p_rp_{r+i'}|$ and, since $c \in e \cap xy$, $k-i > k'-i'$.
This implies $k > k'$, which contradicts our assumption.
Therefore $x$ lies between $p_r$ and $p_{r+i'}$ on $\ell_P$ and, in particular, $x \in P$.
The point $y$ lies below $q_{s-k'+i'}$, since otherwise $y \in Q$ and $c \in e \cap xy$ implies that $xy$ gives $F'_c  \neq F'_{c'}$.
However, if $y$ is below $q_{s-k'+i'}$, then the edge $xq_{s-k'+i'}$ gives $F'_c\neq F'_{c'}$ and we obtain a contradiction; see part~(b) of Figure~\ref{fig-arrangFaces}.

\begin{figure}[ht]
\centering
\includegraphics{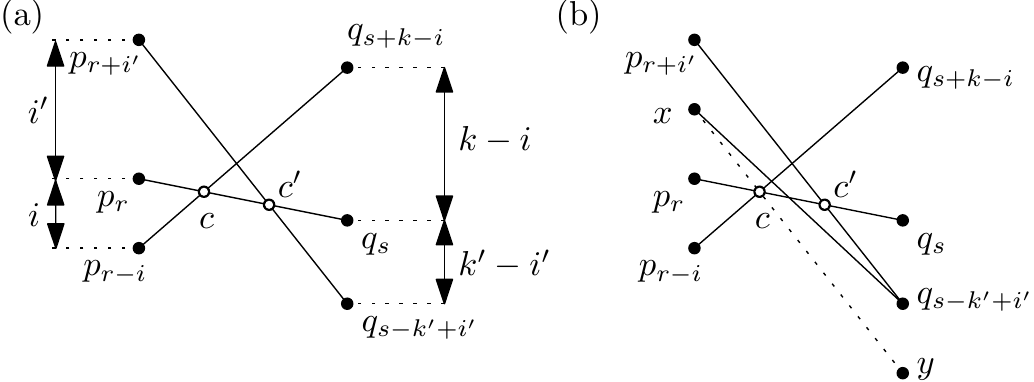}
\caption{Situations in the proof of Theorem~\ref{thm-arrangements}.}
\label{fig-arrangFaces}
\end{figure}

Let $K \leq n/2$ be a positive integer whose value we specify later.
For coprime integers $i$ and $k$ satisfying $1\leq i < k \leq K$, let $\mathcal{F}_{i,k}$ be the set of faces $F_c$ where $c$ is a uniform $(i,k)$-crossing in $D'$.
It follows from our observations that $\mathcal{F}_{i,k}$ contains at most $kn$ faces and that the complexity of $\mathcal{F}_{i,k}$ is at least $n^2-2kn$.
We let $\mathcal{F}\colonequals \bigcup_{i,k}\mathcal{F}_{i,k}$ where the union is taken over all coprime integers $i$ and $k$ satisfying $1\leq i < k \leq K$.
Then $\mathcal{F}$ contains at most 
\[
\sum_{k=2}^K\sum_{\substack{i=1\\ \gcd(i,k)=1}}^{k-1} kn 
< \frac{nK^3}{2}
\]
faces.
The inequality follows from $kn \leq Kn$ and from the fact that there are at most $\binom{K}{2}<K^2/2$ pairs $(i,k)$ with $1 \leq i < k \leq K$.

Since the sets $\mathcal{F}_{i,k}$ are pairwise disjoint, the complexity of $\mathcal{F}$ is at least 
\[
\sum_{k=2}^K\sum_{\substack{i=1\\ \gcd(i,k)=1}}^{k-1} (n^2-2kn) =
\sum_{k=2}^K \varphi(k-1)(n^2-2kn) >
n^2\sum_{j=1}^{K-1}\varphi(j)-nK^3,
\]
where $\varphi(j)$ denotes the Euler's totient function.
The totient summatory function satisfies $\sum_{j=1}^{m}\varphi(j)\geq \frac{3m^2}{\pi^2}-O(m\log{m})$~\cite[pages 268--269]{Hardy79}.
Thus the complexity of $\mathcal{F}$ is at least $\frac{3n^2K^2}{\pi^2}-nK^3-O(n^2K\log{K})$.

Let $M$ be a given integer that satisfies $8n \leq M \leq n^4/8$.
We set $K \colonequals (M/n)^{1/3}$. 
We may assume that $K$ is an integer, as it does not affect the asymptotics.
For $8n \leq M \leq n^4/8$, we have $2 \leq K \leq n/2$.
The set $\mathcal{F}$ then contains at most $M$ faces and its complexity is at least 
\[\frac{3}{\pi^2}n^{4/3}M^{2/3}-M-O(M^{1/3}n^{5/3}\log{(M/n)}),\]
which is $\Omega(n^{4/3}M^{2/3})$ for a sufficiently large absolute constant $C$ and $Cn \leq M \leq n^4/C$.

\begin{remark}
We let $k,M \in \mathbb{N}$, $n=k^2$ and assume that $\Omega(\sqrt{n}) \leq M \leq O(n^2)$.
If we expand the edges of a regular bipartite drawing $K_{k,k}$ to lines, we obtain a set of $n$ lines and $M$ points 
with $\Omega((Mn)^{2/3})$ incidences, which is the maximum possible number, up to a multiplicative constant.
This construction is, however, only a transformation of the construction of Erd\H{o}s described for example 
in a paper of Pach and T\'{o}th~\cite{PachToth}.

\emph{Geometric duality} is a mapping from points and lines in the plane to lines and points, respectively, 
where the lines are restricted to those with finite slope.
A point $(a,b) \in \mathbb{R}^2$ is mapped to the line whose every point $(x,y)$ satisfies $y = ax-b$.
Notice that the duality preserves point-line incidences.
Let $l_{i,j}$ be the line passing through the points $(0,i)$ and $(1,j)$.
Its dual is the point $(j-i, -i)$.
Thus, the set of lines $l_{i,j}$ with $i,j \in \{0, \ldots, k-1\}$ is mapped to the set of points 
with integer coordinates inside the quadrangle with vertices $(k-1, 0)$, $(0,0)$, $(-(k-1), -(k-1))$ and $(0, -(k-1))$.
This point set can be transformed by a linear transformation of the plane into the set $\{0, 1, \ldots, k-1\}^2$,
which is the point set used in the construction of Erd\H{o}s.
\end{remark}

\end{document}